\newtheorem{thm}{Theorem}[section]
\newtheorem{prop}[thm]{Proposition}
\newtheorem{lem}[thm]{Lemma}
\newtheorem{false statement}{False statement}
\newtheorem{cor}[thm]{Corollary}
\newtheorem{fact}[thm]{Fact}
\theoremstyle{definition}
\newtheorem{claim}{Claim}
\newtheorem{example}[thm]{Example}
\makeatletter \@addtoreset{equation}{section}
\def\hh{\mathcal{H}}
\def\hf{\mathcal{F}}
\def\hg{\mathcal{G}}
\def\hk{\mathcal{K}}
\def\hl{\mathcal{L}}
\def\ha{\mathcal{A}}
\def\hb{\mathcal{B}}
\def\hs{\mathcal{S}}
\def\hht{\mathcal{T}}
\begin{document}
\title{\bf\Large A Product Version of the Hilton-Milner-Frankl Theorem}
\date{}
\author{Peter Frankl$^1$, Jian Wang$^2$\\[10pt]
$^{1}$R\'{e}nyi Institute, Budapest, Hungary\\[6pt]
$^{2}$Department of Mathematics,
Taiyuan University of Technology,\\
Taiyuan {\rm 030024}, China\\[6pt]
E-mail:  $^1$frankl.peter@renyi.hu, $^2$wangjian01@tyut.edu.cn
}
\maketitle

\begin{abstract}
Two families $\hf,\hg$ of $k$-subsets of $\{1,2,\ldots,n\}$ are called non-trivial
cross $t$-intersecting if $|F\cap G|\geq t$ for all $F\in \hf, G\in \hg$ and
$|\cap \{F\colon F\in \hf\}|<t$, $|\cap \{G\colon G\in\hg\}|<t$. In the present paper,
we determine the maximum product of the sizes of two non-trivial cross $t$-intersecting
families of   $k$-subsets of $\{1,2,\ldots,n\}$ for $n\geq 4(t+2)^2k^2$, $k\geq 5$,
which is a product version of the  Hilton-Milner-Frankl Theorem.

\vspace{6pt}
{\noindent\bf AMS classification:} 05D05.

\vspace{6pt}
{\noindent\bf Key words:} extremal set theory, cross $t$-intersecting family, product version, non-trivial
\end{abstract}

\section{Introduction}
Let $n>k>t$ be positive integers and let $[n]=\{1,2,\ldots,n\}$ be the standard $n$-element set. Let $\binom{[n]}{k}$ denote the collection of all $k$-subsets of $[n]$. Subsets of $\binom{[n]}{k}$ are called {\it $k$-uniform hypergraphs} or {\it $k$-graphs} for short.  A $k$-graph $\hf$ is called {\it $t$-intersecting} if $|F\cap F'|\geq t$ for all $F,F'\in \hf$.

One of the most important results in extremal set theory is the following:

\vspace{6pt}
{\noindent\bf Erd\H{o}s-Ko-Rado Theorem} (\cite{EKR}){\bf.} Suppose that $n\geq n_0(k,t)$ and $\hf\subset \binom{[n]}{k}$ is $t$-intersecting. Then
\begin{align}\label{ineq-ekr}
|\hf| \leq \binom{n-t}{k-t}.
\end{align}
\vspace{6pt}

{\noindent\bf Remark.} For $t=1$ the exact value $n_0(k,t)=(k-t+1)(t+1)$ was proved in \cite{EKR}. For
$t\geq 15$ it is due to \cite{F78}. Finally Wilson \cite{W84} closed the gap $2\leq t\leq 14$ with a proof valid for all $t$.

Pyber \cite{Pyber86} proved a product version of the Erd\H{o}s-Ko-Rado Theorem for $t=1$.

\begin{thm}[Pyber \cite{Pyber86}]
Suppose that $\hf,\hg\subset \binom{[n]}{k}$ are cross-intersecting, $n\geq 2k$ then
\begin{align}\label{ineq-pyber}
|\hf||\hg| \leq \binom{n-1}{k-1}^2.
\end{align}
\end{thm}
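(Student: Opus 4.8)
The plan is to combine complementation with the Kruskal--Katona theorem and reduce everything to a one-parameter optimization. We may assume $\hf$ and $\hg$ are both non-empty, since otherwise the product is $0$, and by symmetry that $|\hf|\ge|\hg|$. If $|\hf|\le\binom{n-1}{k-1}$ then $|\hf|\,|\hg|\le\binom{n-1}{k-1}^{2}$ and we are done, so from now on assume $|\hf|>\binom{n-1}{k-1}$.

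Replace $\hg$ by the complementary family $\hg^{*}=\{[n]\setminus G:G\in\hg\}\subseteq\binom{[n]}{n-k}$; this is legitimate because $n\ge 2k$ gives $n-k\ge k$, and $|\hg^{*}|=|\hg|$. Since $F\cap G=\emptyset$ is equivalent to $F\subseteq[n]\setminus G$ (using $k\le n-k$), the cross-intersecting condition says precisely that no member of $\hf$ is a $k$-subset of a member of $\hg^{*}$; equivalently, the iterated lower shadow $\partial^{(n-2k)}\hg^{*}$, consisting of all $k$-subsets of members of $\hg^{*}$, is disjoint from $\hf$. Hence
\[
\bigl|\partial^{(n-2k)}\hg^{*}\bigr|\le\binom{n}{k}-|\hf|<\binom{n}{k}-\binom{n-1}{k-1}=\binom{n-1}{k}.
\]
If $n=2k$ this already finishes the argument: then $n-2k=0$, so $\hf\cap\hg^{*}=\emptyset$, whence $|\hf|+|\hg|\le\binom{2k}{k}$ and $|\hf|\,|\hg|\le\binom{2k-1}{k-1}^{2}$ by AM--GM.

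Suppose now $n>2k$. By the Kruskal--Katona theorem, among all $(n-k)$-uniform families of a fixed size $B$ the initial segment $\hc_{B}$ in the colex order minimizes $\bigl|\partial^{(n-2k)}(\cdot)\bigr|$, and this minimum is non-decreasing in $B$; it already equals $\binom{n-1}{k}$ when $B=\binom{n-1}{n-k}$, since then $\hc_{B}=\binom{[n-1]}{n-k}$. Combined with the displayed inequality this forces $|\hg|<\binom{n-1}{k-1}$, and more precisely, setting $M=\binom{n}{k}-|\hf|$ and $g(M)=\max\{B:|\partial^{(n-2k)}\hc_{B}|\le M\}$, the theorem gives $|\hg|\le g(M)$, and therefore
\[
|\hf|\,|\hg|\le\Bigl(\binom{n}{k}-M\Bigr)\,g(M).
\]
Thus it suffices to prove $\bigl(\binom{n}{k}-M\bigr)g(M)\le\binom{n-1}{k-1}^{2}$ for every integer $M$ with $0\le M<\binom{n-1}{k}$.

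I expect this last inequality to be the main obstacle. The function $g(M)$ must be read off from the cascade (Kruskal--Katona) representation of $M$; when $g(M)=\binom{y}{n-k}$ is itself a binomial coefficient one has $M=\binom{y}{k}$ and the quantity to bound is $\phi(y)=\bigl(\binom{n}{k}-\binom{y}{k}\bigr)\binom{y}{n-k}$ with $y\le n-1$, which one checks is increasing in $y$ on $[\,n-k,\,n-1\,]$ --- this is where $n\ge 2k$ enters --- so that $\phi(y)\le\phi(n-1)=\binom{n-1}{k-1}^{2}$. For the intermediate values of $M$ one genuinely needs the exact Kruskal--Katona bound rather than the weaker estimate $|\partial^{(n-2k)}\hc_{B}|\ge\binom{y}{k}$: near $B=\binom{n-1}{k-1}$ the colex shadow jumps up sharply because the next set in colex contains the point $n$, and it is precisely this jump that keeps the product strictly below $\binom{n-1}{k-1}^{2}$ once $|\hf|>\binom{n-1}{k-1}$. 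Organising this case analysis --- and verifying the monotonicity of $\phi$ --- is the technical core of the proof. (Alternatively one can first left-compress $\hf$ and $\hg$ simultaneously, which preserves their sizes and cross-intersection, and induct on $n$ by splitting each family according to whether it contains the point $1$; but this route still terminates in an optimization of the same flavour.)
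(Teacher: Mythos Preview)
The paper does not prove this theorem: it is stated with attribution to Pyber and then used as a black box later in the argument, so there is no proof in the paper against which to compare yours.

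On the proposal itself: the reduction via complementation and iterated shadows is correct and standard---it is essentially the content of Hilton's Lemma, which the paper also quotes. Your treatment of the case $n=2k$ is complete and clean. For $n>2k$, however, you explicitly stop short of a proof: after reducing to the inequality $\bigl(\binom{n}{k}-M\bigr)\,g(M)\le\binom{n-1}{k-1}^{2}$ you verify it only when $g(M)$ happens to be a single binomial coefficient (and even there the monotonicity of $\phi$ on $[n-k,n-1]$ is asserted, not shown), and then write that the general case ``genuinely needs the exact Kruskal--Katona bound'' and that ``organising this case analysis\ldots is the technical core of the proof''. That is an accurate self-assessment, but it means the argument is unfinished at precisely the point where the real work lies; the remark that the colex shadow ``jumps up sharply'' is a heuristic, not an argument. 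This missing step is not a formality---it is where Pyber's original proof does its work---so as written the proposal is an outline rather than a proof.
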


A $t$-intersecting family $\hf\subset\binom{[n]}{k}$ is called {\it non-trivial} if $|\cap \{F\colon F\in \hf\}|<t$.

\begin{example}
Define two  families
\begin{align*}
&\hh(n,k,t) =\left\{H\in \binom{[n]}{k}\colon [t]\subset H, H\cap [t+1,k+1]\neq \emptyset\right\}\cup \left\{[k+1]\setminus \{j\}\colon 1\leq j\leq t\right\},\\[5pt]
&\ha(n,k,t) =\left\{A\in \binom{[n]}{k}\colon |A\cap [t+2]|\geq t+1\right\}.
\end{align*}
It is easy to see that both $\hh(n,k,t)$ and $\ha(n,k,t)$ are non-trivial $t$-intersecting families.
\end{example}

\vspace{6pt}
{\noindent\bf Hilton-Milner-Frankl Theorem} (\cite{HM67,F78-2}){\bf.} Suppose that $\hf\subset\binom{[n]}{k}$ is non-trivial $t$-intersecting, $n\geq (k-t+1)(t+1)$. Then
\begin{align}\label{ineq-hmfrankl}
|\hf|\leq \max\left\{|\ha(n,k,t)|,|\hh(n,k,t)|\right\}.
\end{align}

Two families $\hf,\hg\subset \binom{[n]}{k}$ are called {\it cross $t$-intersecting} if $|F\cap G|\geq t$ for any $F\in \hf, G\in \hg$. If $\ha\subset \binom{[n]}{k}$ is $t$-intersecting, then $\hf=\ha$, $\hg=\ha$ are cross $t$-intersecting.

In the present paper, we prove a product version of the Hilton-Milner-Frankl Theorem.

\begin{thm}\label{thm-main}
Suppose that $\hf,\hg\subset \binom{[n]}{k}$ are non-trivial cross $t$-intersecting. If
$n\geq  4(t+2)^2k^2$ and $k\geq 5$, then
\begin{align}\label{nonemptycommon2}
|\hf||\hg|\leq \max\left\{|\ha(n,k,t)|^2,|\hh(n,k,t)|^2\right\}.
\end{align}
\end{thm}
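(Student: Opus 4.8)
The plan is to combine simultaneous shifting with the Hilton--Milner--Frankl Theorem, the only genuine work being an explicit ``double star'' estimate that appears precisely when shifting destroys non-triviality. First I would reduce to the case $\hf,\hg\neq\emptyset$ and, by symmetry, $|\hf|\geq|\hg|$. Replacing $\hf$ by $\{F\in\binom{[n]}{k}\colon |F\cap G|\geq t\ \text{for all}\ G\in\hg\}$, and then $\hg$ similarly, I may also assume that each of the two families consists of exactly those $k$-sets meeting every member of the other in at least $t$ points; this only increases the two sizes, and since $\hf$ had no common $t$-element core, neither of the new families becomes trivial, so the pair is still non-trivial cross-$t$-intersecting. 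Two ingredients will be used: the product Erd\H{o}s--Ko--Rado bound (any two cross-$t$-intersecting families of $k$-subsets of $[n]$ have size-product at most $\binom{n-t}{k-t}^{2}$ when $n\geq(t+1)(k-t+1)$), and the Hilton--Milner--Frankl Theorem above. For $n\geq 4(t+2)^{2}k^{2}$ both $|\ha(n,k,t)|$ and $|\hh(n,k,t)|$ are of order $\binom{n-t-1}{k-t-1}$, i.e.\ smaller than $\binom{n-t}{k-t}$ by a factor of order $n/k$; exploiting this gap is the crux throughout.

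\medskip
\noindent\textbf{The generic case.}
I would apply simultaneous shifts $S_{pq}$ ($1\le p<q\le n$) to the pair $(\hf,\hg)$, which preserve $|\hf|$, $|\hg|$ and the cross-$t$-intersecting property. Suppose this can be iterated until a fully shifted pair $(\hf^{\ast},\hg^{\ast})$ is reached with both families still non-trivial. The key observation is that in a shifted cross-$t$-intersecting pair of nonempty $k$-graphs each family is automatically $t$-intersecting: a nonempty shifted family of $k$-sets contains $[k]$, while a shifted family that is not $t$-intersecting must contain $B:=\{k-t+2,k-t+3,\ldots,2k-t+1\}$ (the canonical partner of $[k]$, with $|[k]\cap B|=t-1$), and then $B\in\hf^{\ast}$ together with $[k]\in\hg^{\ast}$ violates cross-$t$-intersection; symmetrically for $\hg^{\ast}$. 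Hence $\hf^{\ast},\hg^{\ast}$ are non-trivial $t$-intersecting families, and since $n\geq 4(t+2)^{2}k^{2}>(k-t+1)(t+1)$ the Hilton--Milner--Frankl Theorem gives $|\hf^{\ast}|,|\hg^{\ast}|\leq\max\{|\ha(n,k,t)|,|\hh(n,k,t)|\}$, whence
\begin{align*}
|\hf||\hg|=|\hf^{\ast}||\hg^{\ast}|\leq\max\{|\ha(n,k,t)|,|\hh(n,k,t)|\}^{2}=\max\{|\ha(n,k,t)|^{2},|\hh(n,k,t)|^{2}\}.
\end{align*}

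\medskip
\noindent\textbf{The main obstacle.}
The remaining case --- which I expect to be the heart of the proof --- is that shifting cannot be carried through within non-trivial pairs: there is a pair $(\hf_0,\hg_0)$ reachable from $(\hf,\hg)$ by shifts, hence non-trivial cross-$t$-intersecting with $|\hf_0||\hg_0|=|\hf||\hg|$, and one further shift $S_{pq}$ that makes, say, $\hg_0$ trivial. Taking $T$ to be a $t$-element subset of $\bigcap S_{pq}(\hg_0)$, an inspection of $S_{pq}$ shows that $p\in T$, $q\notin T$, and that every $G\in\hg_0$ either contains $T$ or contains $T':=(T\setminus\{p\})\cup\{q\}$ while omitting $p$; thus $\hg_0$ lies in the double star $\hs_T\cup\hs_{T'}$ with $|T\cap T'|=t-1$, where $\hs_U:=\{F\in\binom{[n]}{k}\colon U\subseteq F\}$, and both parts of $\hg_0$ are nonempty. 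I would then split according to whether $S_{pq}(\hf_0)$ is also trivial with core $T$. If it is, then likewise $\hf_0\subseteq\hs_T\cup\hs_{T'}$, and one must bound the product of two non-trivial cross-$t$-intersecting families inside the same double star; the decisive point here is that a member of $\hf_0$ containing $T$ but not $q$ and a member of $\hg_0$ containing $T'$ but not $p$ already share only $T\setminus\{p\}$, so their tails outside $T\cup T'$ must intersect, and Pyber's product theorem bounds the product of the sizes of these tail-restricted subfamilies by $\binom{n-t-2}{k-t-1}^{2}$, while the parts contained in $\hs_{T\cup T'}=\hs_T\cap\hs_{T'}$ have size at most $\binom{n-t-1}{k-t-1}$ each; keeping track of all types of ordered pairs (those inside $\hs_{T\cup T'}$ and those not) and using $n\geq 4(t+2)^{2}k^{2}$ then yields $|\hf_0||\hg_0|\leq\max\{|\ha(n,k,t)|^{2},|\hh(n,k,t)|^{2}\}$. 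The other sub-cases --- $S_{pq}(\hf_0)$ trivial with a core other than $T$, or $S_{pq}(\hf_0)$ still non-trivial --- would be handled by a similar analysis: the cross-$t$-intersecting condition with $\hg_0$, which sits in the double star, is shown to force $|\hf_0||\hg_0|$ well below $\max\{|\ha|^{2},|\hh|^{2}\}$ (typically one of the two families then has size only $O\!\left(\binom{n-t-2}{k-t-2}\right)$ up to a constant depending on $k$, and the elementary bound $|\hf_0|\leq\bigl|\{F\in\binom{[n]}{k}\colon |F\cap G_0|\geq t\}\bigr|$ for a fixed $G_0\in\hg_0$ then suffices because $n$ is large relative to $k$). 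This double-star computation, together with the bookkeeping for the degenerate sub-cases, is where essentially all of the effort lies; the shifting reduction and the generic case above are routine.
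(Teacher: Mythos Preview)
Your approach via simultaneous shifting is genuinely different from the paper's, which never shifts: the paper instead analyses the families through their $t$-transversal structure (the ``bases'' $\hb(\hf),\hb(\hg)$), bounds these bases by a branching process (Lemma~\ref{lem3-2}), and then runs a case analysis on $(s_1,s_2)=(s(\hb(\hf)),s(\hb(\hg)))$, with the delicate $s_1=s_2=t+1$ case handled via Proposition~\ref{prop-1.4} on exact cross $t$-intersecting $(t+1)$-graphs. If the shifting route could be made to work it would be conceptually attractive, but as written there are two real problems.

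\medskip
\textbf{The generic case argument is incorrect.} Your key step is the assertion that a shifted family which is not $t$-intersecting must contain $B=\{k-t+2,\ldots,2k-t+1\}$. This is false. Take $t=2$, $k=3$, and let $\hf^{\ast}$ be the shifted closure of $\{1,4,5\}$, namely $\{\{1,2,3\},\{1,2,4\},\{1,2,5\},\{1,3,4\},\{1,3,5\},\{1,4,5\}\}$. Then $|\{1,2,3\}\cap\{1,4,5\}|=1<t$, so $\hf^{\ast}$ is not $2$-intersecting, yet $B=\{3,4,5\}\notin\hf^{\ast}$ since every member of $\hf^{\ast}$ contains $1$. (One can give similar examples for $t=1$: the shifted closure of $\{1,2,7,8\}\cup\{3,4,5,6\}$ with $k=4$ is non-intersecting but omits $\{5,6,7,8\}$.) So the inference ``shifted, non-trivial, cross $t$-intersecting $\Rightarrow$ each family is $t$-intersecting'' is not established by your argument; whether the conclusion itself is even true would require a separate proof, and it is not a standard fact.

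\medskip
\textbf{The ``main obstacle'' is where essentially the whole proof lives, and it is not carried out.} When a shift $S_{pq}$ trivialises $\hg_0$, you correctly identify that $\hg_0$ sits in a double star $\hs_T\cup\hs_{T'}$. But from that point on the sketch is far too thin. In the sub-case where $\hf_0$ also lies in the same double star, the Pyber bound you invoke controls only one of several cross-terms in the expansion of $|\hf_0||\hg_0|$, and the ``bookkeeping for all types of ordered pairs'' that you allude to is precisely the computation that must be done and compared against $\max\{|\ha|^2,|\hh|^2\}$; this is not routine (compare the paper's Subcase~4.2, which needs the function $\varphi(x)$ and a monotonicity argument to close the gap). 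The other sub-cases --- $S_{pq}(\hf_0)$ trivial with a different core, or $S_{pq}(\hf_0)$ still non-trivial --- are dismissed with ``would be handled by a similar analysis'' and a heuristic that one family is then $O\!\bigl(\binom{n-t-2}{k-t-2}\bigr)$; no mechanism is given for this, and in fact when $\hf_0$ remains non-trivial there is no reason it should be small. In the paper these situations correspond to Cases~1--3 and Subcase~4.1, each of which requires its own nontrivial estimate (via \eqref{ineq-crosshb}, \eqref{ineq-crosshb2}, \eqref{ineq-hbub}, \eqref{ineq-hbub2}). As written, the proposal does not supply a proof.
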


It should be mentioned that the case $t=1$ of Theorem \ref{thm-main} was proved for $n\geq 9k$ and
$k\geq 6$ in \cite{FW2022}. Thus we always assume $t\geq 2$ in this paper. Since we are
concerned with the maximum of $|\hf||\hg|$, without further mention we are assuming throughout
the paper that $\hf$ and $\hg$ form a saturated pair, that is, adding an extra $k$-set to either
of the families would destroy the cross $t$-intersecting property.

It should also be mentioned that a similar result to Theorem \ref{thm-main} was obtained recently
 by Cao, Lu, Lv and Wang \cite{cllw2022}, in which a different notion of non-triviality was considered. In
 their paper, the non-triviality of two cross $t$-intersecting families $\hf,\hg$ means that
 $\hf\cup\hg$ is not a $t$-star.
However, to the best of our
knowledge \eqref{nonemptycommon2} is the first one that  implies the Hilton-Milner-Frankl Theorem (just set $\hf=\hg$).

For $\ha,\hb\subset \binom{[n]}{k}$, we say that $\ha,\hb$ are {\it exact cross $t$-intersecting} if $|A\cap B|=t$ for every $A\in \ha$, $B\in \hb$. For $\hf\subset \binom{[n]}{k}$, the $t$-covering number $\tau_t(\hf)$ of $\hf$ is defined as
\[
\tau_t(\hf)=\min\left\{|T|\colon |T\cap F|\geq t \mbox{ for all }F\in \hf\right\}.
\]
For the proofs, we need a result concerning exact cross $t$-intersecting $(t+1)$-uniform families.

\begin{prop}\label{prop-1.4}
Let $n> k> t\geq 2$ and let $\ha,\hb\subset \binom{[n]}{t+1}$ be non-empty exact cross $t$-intersecting. If both $\ha$ and $\hb$ do not contain a sunflower of $k-t+2$ petals with center of size $t$, then one of the following holds:
\begin{itemize}
  \item[(i)] either $|\ha|\leq 2$, $|\hb|\leq k+1$, $\tau_t(\hb)\geq  t+1$ or $|\hb|\leq 2$, $|\ha|\leq k+1$, $\tau_t(\ha)\geq t+1$.
  \item[(ii)]  $\ha\cup \hb$ is  a sunflower with center of size $t$.
  \item[(iii)]   $|\ha||\hb|\leq \frac{(t+2)^2}{2}$.
\end{itemize}
\end{prop}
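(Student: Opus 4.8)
\noindent\emph{Plan of proof.} The guiding idea is the rigidity of the hypothesis: since $|A|=|B|=t+1$ and $|A\cap B|=t$, any $A\in\ha$ and any $B\in\hb$ satisfy $|A\setminus B|=|B\setminus A|=1$, i.e.\ they differ by a single swap. Two facts are recorded at the outset: (a) $\ha\cap\hb=\emptyset$, since no $(t+1)$-set meets itself in only $t$ points; and (b) a $(t+1)$-uniform family $\hc$ has $\tau_t(\hc)=t$ exactly when $|\bigcap\hc|\ge t$, i.e.\ exactly when $\hc$ is a sunflower with a center of size $t$, and otherwise $\tau_t(\hc)\ge t+1$. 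One then splits into cases according to whether $\ha$ and $\hb$ are such sunflowers, dealing with $\min(|\ha|,|\hb|)=1$ by a direct inspection of the subfamilies of $\binom{X}{t+1}$ that meet a fixed $(t+1)$-set in exactly $t$ points.

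\emph{Both $\ha$ and $\hb$ are sunflowers with a $t$-center.} Write $\ha=\{T\cup\{a\}:a\in P\}$ and $\hb=\{T'\cup\{b\}:b\in P'\}$ with $|T|=|T'|=t$. If $T=T'$ then (a) forces $P\cap P'=\emptyset$, so $\ha\cup\hb$ is a sunflower with the $t$-set $T$ as center: alternative (ii). If $T\neq T'$, put $s=|T\cap T'|$ and expand $|(T\cup\{a\})\cap(T'\cup\{b\})|=s+\varepsilon$, where $\varepsilon\in\{0,1,2\}$ records which of $a\in T'$, $b\in T$, $a=b$ occur; forcing this to equal $t$ for all $a\in P$, $b\in P'$ shows that once $|\ha|,|\hb|\ge 2$ one must have $s=t-2$, and then $P$ and $P'$ are trapped in fixed $2$-element sets, so $|\ha||\hb|\le 4<\tfrac{(t+2)^2}{2}$: alternative (iii).

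\emph{Not both of $\ha$, $\hb$ are such sunflowers.} Say $\ha$ is one, with center $T$, while $\tau_t(\hb)\ge t+1$. For each $B\in\hb$ we have $|T\cap B|\in\{t-1,t\}$: if $|T\cap B|=t$ then $T\subseteq B$ and the swap condition makes all petals of $\ha$ avoid $B$; if $|T\cap B|=t-1$ then every petal of $\ha$ lies in the $2$-set $B\setminus T$, forcing $|\ha|\le 2$. Combining these with the smallness of $\bigcap\hb$ shows that either $|\ha|\le 2$, with the members of $\hb$ so constrained relative to $T$ that $|\hb|\le k+1$ (and $\tau_t(\hb)\ge t+1$) — alternative (i) — or $|\ha||\hb|\le\tfrac{(t+2)^2}{2}$; interchanging $\ha,\hb$ covers the other half of (i). If \emph{neither} family is a $t$-sunflower, fix $A_0=C\cup\{a_0\}\in\ha$ and $B_0=C\cup\{b_0\}\in\hb$ with $|C|=t$, and split $\ha=\ha_0\cup\ha_1$, $\hb=\hb_0\cup\hb_1$ according to whether $b_0\in A$, resp.\ $a_0\in B$; then $\ha_1$ and $\hb_1$ are sunflowers with center $C$, and $\ha_0,\hb_0\ne\emptyset$ because neither family is a $t$-sunflower. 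The swap condition on the four products $\ha_i\times\hb_j$ then forces $|\ha_1|,|\hb_1|\le 2$ (a petal set of size $\ge 3$ in $\ha_1$ would empty $\hb_0$, and symmetrically), and a parallel look at $\ha_0$ and $\hb_0$ — or a short count, the relevant sets having pairwise intersections near $t$ — bounds $|\ha||\hb|$ by $\tfrac{(t+2)^2}{2}$: alternative (iii).

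The genuinely delicate step is extracting the explicit bound $|\hb|\le k+1$ in alternative (i): the swap conditions fix the \emph{shape} of $\hb$ relative to the sunflower $\ha$, but turning that shape into a clean numerical bound needs a careful accounting of how the at most two petals of $\ha$ interact with the two kinds of members of $\hb$ — those containing $T$ and those meeting $T$ in $t-1$ points — together with a check that the borderline configurations are precisely those listed in (i). The remaining work, namely the arithmetic certifying $|\ha||\hb|\le\tfrac{(t+2)^2}{2}$ in every non-sunflower configuration not covered by (i), is routine once this structure is in hand.
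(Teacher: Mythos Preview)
Your decomposition by whether $\ha$ and $\hb$ are sunflowers with a $t$-center is a different organising principle from the paper's. The paper instead proves two short facts: any two members of $\ha$ meet in $t-1$ or $t$ points, and if $A,A'\in\ha$ have $|A\cap A'|=t-1$ then $A\cap A'\subset B$ for \emph{every} $B\in\hb$. The second fact is the engine: once such a pair exists in $\ha$, all of $\hb$ is trapped among at most four explicit $(t+1)$-sets, after which a two-case split finishes quickly. Your route eventually recovers fragments of this rigidity but less directly, and in particular your ``neither family is a sunflower'' case is left as a sketch. The sentence ``a parallel look at $\ha_0$ and $\hb_0$ --- or a short count --- bounds $|\ha||\hb|$ by $(t+2)^2/2$'' is not a proof: the interaction between $\ha_0$ and $\hb_0$ (each consisting of sets of the form $(\text{$(t{-}1)$-subset of }C)\cup\{b_0,z\}$, resp.\ $\cup\{a_0,y\}$) involves several free parameters, and the bound does not fall out without further case analysis. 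This is a genuine gap.

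You are right that the bound $|\hb|\le k+1$ in alternative~(i) is the delicate point, and in fact it cannot be extracted from the stated hypotheses of the proposition alone. The paper's proof obtains it by invoking~\eqref{ineq-hbub}, which ultimately rests on Lemma~\ref{lem-5.1}: in the intended application $\ha,\hb$ are the $(t+1)$-uniform layers of the bases $\hb(\hf),\hb(\hg)$, and those contain no sunflower of $k-t+2$ petals with a $t$-center because $\hf,\hg$ are non-trivial. So the proposition is really being proved under that implicit extra hypothesis, and the numerical bound $k+1$ comes from that external input rather than from the exact cross $t$-intersecting structure you are analysing. Your instinct that something is missing here is correct; the resolution is to import the no-large-sunflower assumption from the ambient setting.
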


Let us present some inequalities and notations  needed in the proofs.

\begin{prop}
Let $n,k,i$ be positive integers. Then
\begin{align}\label{ineq-key}
\binom{n-i}{k} \geq \frac{n-ik}{n}\binom{n}{k}, \mbox{\rm \ for } n>ik.
\end{align}
\end{prop}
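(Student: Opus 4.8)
The plan is to prove the inequality by a short double-counting argument rather than by bounding the ratio $\binom{n-i}{k}/\binom{n}{k}=\prod_{j=0}^{k-1}(1-\tfrac{i}{n-j})$ factor by factor. Note first that the statement only carries content when $n>ik$ (otherwise its right-hand side is non-positive and there is nothing to prove), and that $n>ik$ together with $i,k\ge 1$ forces $n\ge i+k$ --- indeed $ik-(i+k-1)=(i-1)(k-1)\ge 0$ --- so $\binom{n-i}{k}$ is a genuine, non-vanishing binomial coefficient and no degenerate case intervenes.

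For the main estimate I would read $\binom{n-i}{k}$ as the number of $k$-subsets of $[n]$ disjoint from a fixed $i$-element set, say $[i]=\{1,\dots,i\}$, so that $\binom{n}{k}-\binom{n-i}{k}$ counts the $k$-subsets of $[n]$ that meet $[i]$. Bounding the latter by the union bound over the $i$ elements of $[i]$,
\[
\binom{n}{k}-\binom{n-i}{k}=\#\left\{S\in\binom{[n]}{k}\colon S\cap[i]\neq\emptyset\right\}\le\sum_{j=1}^{i}\#\left\{S\in\binom{[n]}{k}\colon j\in S\right\}=i\binom{n-1}{k-1}.
\]
Substituting the identity $\binom{n-1}{k-1}=\tfrac{k}{n}\binom{n}{k}$ turns this into $\binom{n}{k}-\binom{n-i}{k}\le\tfrac{ik}{n}\binom{n}{k}$, which rearranges to precisely $\binom{n-i}{k}\ge\tfrac{n-ik}{n}\binom{n}{k}$. (For $i=1$ the union bound is an equality, so the inequality is tight --- a convenient sanity check; alternatively one can run a one-line induction on $i$ using $\binom{n-i}{k}=\binom{n-i+1}{k}-\binom{n-i}{k-1}$.)

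I do not expect any real obstacle here, but it is worth flagging why one should route through the count. The tempting termwise estimate --- bounding each factor $1-\tfrac{i}{n-j}$ from below by $1-\tfrac{i}{n-k+1}$ and then applying Bernoulli's inequality --- only yields $\binom{n-i}{k}/\binom{n}{k}\ge 1-\tfrac{ik}{n-k+1}$, which is weaker than the desired $1-\tfrac{ik}{n}$. The union bound above is itself lossy (it discards the inclusion--exclusion corrections, which happen to work in our favour), but it is lossy in the right direction and, crucially, lands exactly on the round quantity $\tfrac{ik}{n}\binom{n}{k}$; so once the combinatorial reformulation is made the argument is a two-line computation.
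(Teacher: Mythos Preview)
Your argument is correct. The union bound
\[
\binom{n}{k}-\binom{n-i}{k}\le i\binom{n-1}{k-1}=\frac{ik}{n}\binom{n}{k}
\]
is exactly the right computation, and your preliminary check that $n>ik$ forces $n\ge i+k$ (via $(i-1)(k-1)\ge 0$) cleanly disposes of any degenerate cases.

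As for comparison: the paper does not actually supply a proof of this proposition. It is quoted from the companion paper \cite{FW2022} and used as a black box, so there is no ``paper's own proof'' to compare against here. Your double-counting derivation is a perfectly standard and self-contained way to obtain the inequality; the alternative you mention at the end (bounding each factor of the product $\prod_{j=0}^{k-1}(1-i/(n-j))$ and applying Bernoulli) is indeed too crude to land on $1-ik/n$, so routing through the combinatorial count is the right call. The one-line induction on $i$ that you allude to also works and is essentially equivalent.
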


\begin{proof}
It is easy to check for all $b>a>0$ that
\begin{align}\label{ineq-prekey}
ba>(b+1)(a-1) \mbox{ holds.}
\end{align}
Note that
\[
\frac{\binom{n-i}{k}}{\binom{n}{k}} = \frac{(n-k)(n-k-1)\ldots(n-k-(i-1))}{n(n-1)\ldots(n-(i-1))}.
\]
Applying \eqref{ineq-prekey} repeatedly we see that the numerator is greater  than $(n-1)(n-2)\ldots(n-(i-1))(n-ki)$ implying
\[
\frac{\binom{n-i}{k}}{\binom{n}{k}} \geq 1-\frac{ik}{n}.
\]
Thus \eqref{ineq-key} holds.
\end{proof}

By \eqref{ineq-key}, we obtain that for $c>1$ and $n\geq c(k-t)^2+(t+1)$
\begin{align}\label{ineq-key2}
\binom{n-t-1}{k-t-1} &\leq \frac{n-t-1}{n-t-1-(k-t)(k-t-1)}\binom{n-k-1}{k-t-1}\nonumber\\[5pt]
&\leq \frac{n-t-1}{n-t-1-(k-t)^2}\binom{n-k-1}{k-t-1}\nonumber\\[5pt]
&\leq \frac{c}{c-1}\binom{n-k-1}{k-t-1}.
\end{align}
Moreover, for $c> 2$ we have
\begin{align}\label{ineq-key3}
\binom{n-t-1}{k-t-1}^2 &\leq \left(\frac{c}{c-1}\right)^2\binom{n-k-1}{k-t-1}^2\nonumber\\[5pt]
&= \frac{c^2}{c^2-2c+1}\binom{n-k-1}{k-t-1}^2\nonumber\\[5pt]
&\leq \frac{c}{c-2}\binom{n-k-1}{k-t-1}^2.
\end{align}
Similarly, we can show that for $n\geq ck$ and $c>2$,
\begin{align}\label{ineq-key4}
\binom{n-t-1}{k-t-1}^2 \leq \frac{c}{c-2}\binom{n-t-2}{k-t-1}^2.
\end{align}

%
Let us recall the following common notations:
$$\hf(i)=\{F\setminus\{i\}\colon i\in F\in \hf\}, \ \hf(\bar{i})= \{F\in\hf: i\notin F\}.$$
Note that $|\hf|=|\hf(i)|+|\hf(\bar{i})|$. For $P\subset Q\subset [n]$, let
\[
\hf(Q) =\left\{F\setminus Q\colon Q\subset F\in\hf\right\},\ \hf(P,Q) = \left\{F\setminus Q\colon F\cap Q=P, F\in\hf\right\}.
\]
We also use  $\hf(\bar{Q})$ to denote $\hf(\emptyset, Q)$. For $\hf(\{i\},Q)$ we simply write  $\hf(i,Q)$.

Define the family of {\it $\ell$-th $t$-transversals} of $\hf\subset\binom{[n]}{k}$:
\[
\hht_t^{(\ell)}(\hf) = \left\{T\subset [n]\colon |T|=\ell, |T\cap F|\geq t  \mbox{ for all } F\in \hf\right\}
\]
and define the family of {\it $t$-transversals} of $\hf$:
\[
\hht_t(\hf)=\bigcup_{t\leq \ell\leq k}\hht_t^{(\ell)}(\hf).
\]
Clearly,  if $\hf,\hg$ are cross $t$-intersecting then $\hf\subset \hht_t^{(k)}(\hg)$ and $\hg\subset \hht_t^{(k)}(\hf)$.

The rest of the paper is organized as follows. In Section 2, we recall some inequalities concerning cross-intersecting families that are needed in the proofs. In Section 3, we determine the maximum product size of non-trivial cross $t$-intersecting families with a common $t$-transversal of size $t+1$. In Section 4, we define a notion of basis for cross $t$-intersecting families and establish an upper bound on the size of the basis. In  Section 5, we prove Theorem \ref{thm-main}.

\section{Some inequalities concerning cross-intersecting families}

In this section, we recall several useful inequalities concerning cross-intersecting families. We also give a proof of a result of Hilton via the Hilton-Milner-Frankl Theorem.

An important tool for proving the results concerning cross-intersecting families is the Kruskal-Katona Theorem (\cite{Kruskal,Katona}, cf. \cite{F84} or \cite{Keevash} for short proofs of it).

Daykin \cite{daykin} was the first to show that the Kruskal-Katona Theorem implies the $t=1$ case of the Erd\H{o}s-Ko-Rado Theorem. Hilton \cite{Hilton} gave a very useful reformulation of the Kruskal-Katona Theorem. To state it let us recall the definition of the lexicographic order on $\binom{[n]}{k}$. For two distinct sets $F,G\in \binom{[n]}{k}$ we say that $F$ {\it precedes} $G$ if
\[
\min\{i\colon i\in F\setminus G\}<\min\{i\colon i\in G\setminus F\}.
\]
E.g., $\{1,7\}$ precedes $\{2,3\}$. For a positive integer $b$, let $\hl(n,b,m)$ denote the
first $m$ members of $\binom{[n]}{b}$ in lexicographic order.

\vspace{6pt}
{\noindent\bf Hilton's Lemma (\cite{Hilton}).} Let $n,a,b$ be positive integers, $n>a+b$. Suppose that $\ha\subset \binom{[n]}{a}$ and $\hb\subset \binom{[n]}{b}$ are cross-intersecting. Then $\hl(n,a,|\ha|)$ and $\hl(n,b,|\hb|)$ are cross-intersecting as well.
\vspace{6pt}

One can deduce a result of Hilton \cite{Hilton77} from  the Hilton-Milner-Frankl Theorem.

\begin{thm}[\cite{Hilton77}]\label{Hilton77}
Let $\hk_1,\hk_2,\ldots,\hk_{t+1} \subset \binom{X}{a}$ be pairwise cross-intersecting and $|X|=m\geq (t+1)a$. If at least two of them are non-empty, then
\begin{align}\label{Hilton}
\sum_{i=1}^{t+1}|\hk_i|\leq \max\left\{(t+1)\binom{m-1}{a-1},\binom{m}{a}-\binom{m-a}{a}+t\right\}.
\end{align}
\end{thm}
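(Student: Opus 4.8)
The plan is to deduce Theorem~\ref{Hilton77} from the Hilton--Milner--Frankl Theorem by replacing $\hk_1,\ldots,\hk_{t+1}$ with a single non-trivial $t$-intersecting family on an enlarged ground set. Adjoin to $X$ a set $Y=\{y_1,\ldots,y_{t+1}\}$ of $t+1$ new elements, set $T_i=Y\setminus\{y_i\}$ (so $|T_i|=t$), and define
\[
\hf=\bigcup_{i=1}^{t+1}\{K\cup T_i\colon K\in\hk_i\},\qquad
\hf^{+}=\hf\cup\{A\in\binom{X\cup Y}{a+t}\colon Y\subseteq A\}\ \subseteq\ \binom{X\cup Y}{a+t},
\]
where $|X\cup Y|=m+t+1$. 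First I would check that $\hf^{+}$ is $t$-intersecting: two members $K\cup T_i,K'\cup T_i$ of the $i$-th piece share the $t$-set $T_i$; two members $K\cup T_i,K'\cup T_j$ with $i\neq j$ share $(K\cap K')\cup(T_i\cap T_j)$, of size at least $1+(t-1)=t$ because $\hk_i,\hk_j$ are cross-intersecting; and a set containing $Y$ meets each $K\cup T_i$ in $T_i$ and meets every other set containing $Y$ in $Y$.

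Next I would record two numerical observations. The $t+1$ pieces of $\hf$ are pairwise disjoint (the $i$-th piece consists of exactly those of its members that miss $y_i$ among $Y$) and each has $|\hk_i|$ members (as $K\mapsto K\cup T_i$ is injective), while $\hf$ is disjoint from the adjoined family since $|A\cap Y|$ equals $t$ on the former and $t+1$ on the latter; as an $(a+t)$-set containing $Y$ is determined by its $a-1$ remaining elements of $X$, this gives $|\hf^{+}|=\sum_{i=1}^{t+1}|\hk_i|+\binom{m}{a-1}$. Moreover $\hf^{+}$ is non-trivial: for each $j$, since at least two of the $\hk_i$ are non-empty there is a non-empty piece other than the $j$-th, and it omits $y_j$; hence $\bigcap\hf$ contains no $t$-subset of $Y$, so $|\bigcap\hf^{+}|=|Y\cap\bigcap\hf|\le t-1<t$. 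Finally $m\ge(t+1)a$ gives $m+t+1\ge(a+1)(t+1)=(k-t+1)(t+1)$ with $k=a+t$, so the Hilton--Milner--Frankl Theorem applies to $\hf^{+}\subseteq\binom{[m+t+1]}{a+t}$.

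It then remains to evaluate, for $n=m+t+1$ and $k=a+t$,
\[
|\ha(n,k,t)|=(t+2)\binom{m-1}{a-1}+\binom{m-1}{a-2},\qquad
|\hh(n,k,t)|=\binom{m+1}{a}-\binom{m-a}{a}+t,
\]
and to subtract $\binom{m}{a-1}$ from each: using $\binom{m-1}{a-1}+\binom{m-1}{a-2}=\binom{m}{a-1}$ in the first case and $\binom{m}{a-1}+\binom{m}{a}=\binom{m+1}{a}$ in the second, the bound $|\hf^{+}|\le\max\{|\ha|,|\hh|\}$ becomes exactly \eqref{Hilton}. The only step that needs an idea rather than a calculation is the passage from $\hf$ to $\hf^{+}$: applying the Hilton--Milner--Frankl bound to $\hf$ itself is too lossy, but adjoining all $(a+t)$-subsets of $X\cup Y$ containing $Y$ adds exactly $\binom{m}{a-1}$, which is precisely the amount by which each Hilton--Milner--Frankl extremal family overshoots the corresponding term of \eqref{Hilton}, and one must verify that this enlargement keeps the family non-trivially $t$-intersecting. (The case $a=1$ is immediate on its own --- every non-empty $\hk_i$ is then the same singleton --- and is also recovered from the displayed formulas with the convention $\binom{m-1}{-1}=0$.)
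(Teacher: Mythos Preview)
Your construction is exactly the paper's: adjoin a $(t+1)$-set $Y$, replace each $\hk_i$ by $\{K\cup(Y\setminus\{y_i\}):K\in\hk_i\}$, throw in all $(a+t)$-sets containing $Y$, and apply the Hilton--Milner--Frankl Theorem; the arithmetic reducing $|\ha(n,k,t)|$ and $|\hh(n,k,t)|$ to the two terms of \eqref{Hilton} is also the same. One sentence in your non-triviality check is backwards, though: a non-empty piece \emph{other} than the $j$-th consists of sets $K\cup T_i$ with $i\ne j$, and these \emph{contain} $y_j$ (since $y_j\in T_i=Y\setminus\{y_i\}$); the correct reason $|Y\cap\bigcap\hf|\le t-1$ is that the $j$-th piece itself omits $y_j$, so each $j$ with $\hk_j\ne\emptyset$ contributes a missing element of $Y$, and by hypothesis there are at least two such $j$.
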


\begin{proof}
Set $n=m+t+1$, $k=a+t$, $X=[t+2,n]$. Define the family $\hk=\tilde{\hk}_0\cup \ldots\cup \tilde{\hk}_{t+1}\subset \binom{[n]}{k}$ via
\begin{align*}
\tilde{\hk}_0 &=\left\{[t+1]\cup K\colon K\in \binom{X}{k-t-1}\right\},\\[5pt]
\tilde{\hk}_i &=\left\{([t+1]\setminus \{i\})\cup K\colon K\in \hk_i\right\},\ 1\leq i\leq t+1.
\end{align*}
It is easy to check that $\hk$ is $t$-intersecting. Let $I=\cap \{K\colon K\in \hk\}$. Let us show $|I|<t$. Without loss of generality assume that $\hk_1,\hk_2$ are non-empty. It follows that $1,2\notin I$. For any $t+2\leq x\leq n$, $x\notin \cap \{K\colon K\in \tilde{\hk}_0\}\supset I$. Thus $\hk$ is non-trivial. Applying the Hilton-Milner-Frankl Theorem, for $(m+t+1)\geq (a+1)(t+1)$ we have
\begin{align*}
|\hk| &=\binom{n-t-1}{k-t-1} +\sum_{1\leq i\leq t+1}|\tilde{\hk}_i| \\[5pt]
&\leq \max\{|\ha(n,k,t)|,|\hh(n,k,t)|\}\\[5pt]
&=\max\left\{(t+2)\binom{n-t-2}{k-t-1}+\binom{n-t-2}{k-t-2}, t+\binom{n-t}{k-t}-\binom{n-k-1}{k-t}\right\}\\[5pt]
&=\binom{n-t-1}{k-t-1}+ \max\left\{(t+1)\binom{m-1}{a-1},\binom{m}{a}-\binom{m-a}{a}+t\right\},
\end{align*}
implying \eqref{Hilton}.
\end{proof}

{\noindent \bf Remark.} It should be mentioned that if $\hf$ is non-trivial $t$-intersecting
and has a $t$-transversal of size $t+1$, then one can also deduce the Hilton-Milner-Frankl Theorem
from Theorem \ref{Hilton77}.

\begin{cor}
Let $\hk_1,\hk_2\subset \binom{X}{a}$ be non-empty  cross-intersecting and $|X|=m\geq (t+1)a$. If $|\hk_1|\geq |\hk_2|$, then
\begin{align}\label{Hilton2}
|\hk_1|+t|\hk_2|\leq \max\left\{(t+1)\binom{m-1}{a-1},\binom{m}{a}-\binom{m-a}{a}+t\right\}.
\end{align}
\end{cor}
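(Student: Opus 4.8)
The plan is to reduce \eqref{Hilton2} to Theorem \ref{Hilton77} by manufacturing $t+1$ pairwise cross-intersecting families out of the two families $\hk_1$ and $\hk_2$. The naive attempt --- take $\hk_1$ together with $t$ copies of $\hk_2$ --- requires $\hk_2$ itself to be intersecting, so that any two of the $t$ repeated copies are cross-intersecting; this need not hold a priori. The key observation is that it can be arranged for free after passing to lexicographic initial segments.

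First I would invoke Hilton's Lemma: since $\hk_1,\hk_2\subset\binom{X}{a}$ are cross-intersecting and $m=|X|\geq(t+1)a>2a$ (recall $t\geq 2$ throughout), we may replace $\hk_1$ by $\hl(m,a,|\hk_1|)$ and $\hk_2$ by $\hl(m,a,|\hk_2|)$ without changing the cardinalities while preserving the cross-intersecting property. Now crucially, $|\hk_2|\leq|\hk_1|$ forces $\hl(m,a,|\hk_2|)\subseteq\hl(m,a,|\hk_1|)$, so after this compression we have $\hk_2\subseteq\hk_1$. Cross-intersection of $\hk_1$ and $\hk_2$ then gives $|A\cap A'|\geq 1$ for every $A\in\hk_1\supseteq\hk_2$ and every $A'\in\hk_2$; in particular $\hk_2$ is (ordinarily) intersecting.

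Next I would set $\hk_1':=\hk_1$ and $\hk_2':=\cdots:=\hk_{t+1}':=\hk_2$. These $t+1$ families are pairwise cross-intersecting: $\hk_1'$ and $\hk_j'$ for $j\geq 2$ are cross-intersecting by hypothesis, while $\hk_i'$ and $\hk_j'$ with $i,j\geq 2$ are cross-intersecting precisely because $\hk_2$ is intersecting. Both $\hk_1'$ and $\hk_2'$ are non-empty, so Theorem \ref{Hilton77} applies with $|X|=m\geq(t+1)a$ and yields
\begin{align*}
|\hk_1|+t|\hk_2|=\sum_{i=1}^{t+1}|\hk_i'|\leq\max\left\{(t+1)\binom{m-1}{a-1},\ \binom{m}{a}-\binom{m-a}{a}+t\right\},
\end{align*}
which is exactly \eqref{Hilton2}.

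There is essentially no serious obstacle beyond spotting the lex-compression trick. The only points needing a line of care are: checking $m>2a$ so that Hilton's Lemma is applicable (guaranteed since $t\geq 2$ forces $m\geq 3a$); confirming that compression leaves $|\hk_1|,|\hk_2|$ unchanged and the inclusion $\hk_2\subseteq\hk_1$ holds for nested initial segments; and observing that "pairwise cross-intersecting" in Theorem \ref{Hilton77} tolerates repeated families as long as each repeated family is intersecting, which we have arranged.
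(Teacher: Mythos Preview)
Your proof is correct and follows essentially the same route as the paper: apply Hilton's Lemma to pass to lexicographic initial segments, observe that $\hk_2$ becomes intersecting, then feed $\hk_1,\hk_2,\ldots,\hk_2$ into Theorem~\ref{Hilton77}. The only cosmetic difference is that the paper deduces $\hk_2$ is intersecting from $|\hk_2|\leq\binom{m-1}{a-1}$ (so every set in the lex initial segment contains the smallest element), whereas you deduce it from the inclusion $\hk_2\subseteq\hk_1$ together with cross-intersection; both are valid one-line justifications.
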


\begin{proof}
By Hilton's Lemma, we may assume that $\hk_i$  consists of the first $|\hk_i|$ members of $\binom{X}{a}$ in lexicographic order. Clearly $|\hk_2|\leq \binom{n-1}{a-1}$, implying that $\hk_2$ is intersecting. Then $\hk_1,\hk_2,\ldots,\hk_2$ are all non-empty
 and pairwise cross-intersecting. By  \eqref{Hilton} the corollary follows.
\end{proof}

We should mention that Hilton's result holds for the full range $m\geq 2a$, but we only use it in the range that we proved it.

\begin{thm}[\cite{F87}]
Let $\hk_1,\hk_2\subset \binom{X}{a}$ be non-empty cross-intersecting and $|X|=m\geq 2a$. If $|\hk_1|\geq |\hk_2|\geq  \binom{m-2}{a-2}$, then
\begin{align}\label{FT92}
|\hk_1|+|\hk_2|\leq 2\binom{m-1}{a-1}.
\end{align}
\end{thm}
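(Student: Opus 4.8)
If $m=2a$, then each $a$-subset of $[m]$ has a unique disjoint $a$-subset, so $\hk_1$ and $\{[m]\setminus G\colon G\in\hk_2\}$ are disjoint subfamilies of $\binom{[m]}{a}$, whence $|\hk_1|+|\hk_2|\le\binom{2a}{a}=2\binom{2a-1}{a-1}$. So assume $m>2a$. The plan is then to apply Hilton's Lemma to replace $\hk_1,\hk_2$ by the lexicographic initial segments $\hl(m,a,|\hk_1|)$ and $\hl(m,a,|\hk_2|)$ of $\binom{X}{a}$ (identifying $X$ with $[m]$), which keeps the sizes and the cross-intersecting property. The lexicographically first $\binom{m-1}{a-1}$ members of $\binom{[m]}{a}$ are exactly the $a$-sets through $1$, and the first $\binom{m-2}{a-2}$ of those are exactly the $a$-sets through $\{1,2\}$, so the size hypotheses become statements about which ``stars'' lie inside $\hk_1$ and $\hk_2$. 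If $|\hk_1|\le\binom{m-1}{a-1}$, then $|\hk_2|\le|\hk_1|\le\binom{m-1}{a-1}$ and \eqref{FT92} follows; so assume $|\hk_1|>\binom{m-1}{a-1}$, i.e.\ $\hk_1$ contains every $a$-set through $1$.

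Since $\hk_2$ cross-intersects $\hk_1$, every $G\in\hk_2$ meets every $a$-subset of $[m]$ through $1$; as $m>2a$ this forces $1\in G$, so $\hk_2\subseteq\{G\colon 1\in G\}$, and being an initial segment of size at least $\binom{m-2}{a-2}$ it contains every $a$-set through $\{1,2\}$. Delete the common element $1$: the families $\hk_1(\bar 1)\subset\binom{\{2,\dots,m\}}{a}$ and $\hk_2(1)\subset\binom{\{2,\dots,m\}}{a-1}$ are cross-intersecting, $|\hk_2(1)|=|\hk_2|$, and $\hk_2(1)$ contains every $(a-1)$-subset of $\{2,\dots,m\}$ through $2$. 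Running the argument one uniformity down — every $F\in\hk_1(\bar 1)$ meets every such $(a-1)$-set, and $m>2a$ — yields $2\in F$ for every $F\in\hk_1(\bar 1)$, so $\hk_1(\bar 1)$ consists of $a$-sets through $2$.

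Delete $2$ as well. Then $\hc_1:=\{F\setminus\{2\}\colon F\in\hk_1(\bar 1)\}$ and $\hc_2:=\{H\in\hk_2(1)\colon 2\notin H\}$ are cross-intersecting $(a-1)$-uniform families on $\{3,\dots,m\}$, with $|\hc_1|=|\hk_1|-\binom{m-1}{a-1}$ and $|\hc_2|=|\hk_2|-\binom{m-2}{a-2}$. For any cross-intersecting $r$-uniform families $\ha,\hb$ on an $N$-element set with $N\ge 2r$ one has $|\ha|+|\hb|\le\binom{N}{r}$: choosing a uniformly random ordered pair $(S,S')$ of disjoint $r$-subsets (possible as $N\ge 2r$), cross-intersection forbids $S\in\ha$ together with $S'\in\hb$, so $(|\ha|+|\hb|)/\binom{N}{r}=\Pr[S\in\ha]+\Pr[S'\in\hb]\le 1$. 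With $r=a-1$ and $N=m-2$ this gives $|\hc_1|+|\hc_2|\le\binom{m-2}{a-1}$, whence
\[
|\hk_1|+|\hk_2|\le\binom{m-1}{a-1}+\binom{m-2}{a-2}+\binom{m-2}{a-1}=2\binom{m-1}{a-1}
\]
by Pascal's identity, which is \eqref{FT92}.

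The routine-but-fiddly part is the bookkeeping across the two successive deletions — verifying that each restricted pair is still cross-intersecting, and tracking how $|\hk_1|,|\hk_2|$ split between the deleted star and the residual family. I would also stress where the hypothesis $|\hk_2|\ge\binom{m-2}{a-2}$ enters: it is exactly what forces $\hk_2(1)$ to contain the full star at $2$, which in turn confines $\hk_1(\bar 1)$ to that star; without it $\hk_1$ may be far larger, consistent with the extremal pair $\hk_1=\{F\colon F\cap\{1,2\}\ne\emptyset\}$, $\hk_2=\{H\colon\{1,2\}\subset H\}$ attaining equality in \eqref{FT92}. The $\binom{N}{r}$ estimate used at the final step is elementary, as shown.
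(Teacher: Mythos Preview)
The paper does not give its own proof of this theorem: it is quoted from \cite{F87} as a known tool and used without argument, so there is no in-paper proof to compare against. Your argument is correct and self-contained. The reduction via Hilton's Lemma to lexicographic initial segments, followed by peeling off the stars at $1$ and then at $2$, is exactly the standard route to this inequality; the final elementary bound $|\ha|+|\hb|\le\binom{N}{r}$ for cross-intersecting $r$-uniform families on $N\ge 2r$ points (your random-disjoint-pair argument) is the clean way to close it. One small remark: your aside about the ``extremal pair'' $\hk_1=\{F:F\cap\{1,2\}\neq\emptyset\}$, $\hk_2=\{H:\{1,2\}\subset H\}$ actually satisfies $|\hk_2|=\binom{m-2}{a-2}$ and attains equality in \eqref{FT92}, so it illustrates tightness rather than failure of the hypothesis; to see that the lower bound on $|\hk_2|$ is genuinely needed you would point instead to, say, $|\hk_2|=1$ with $\hk_1$ all sets meeting a fixed $a$-set.
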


\section{Non-trivial cross $t$-intersecting families with common trasversals}

In this section, we determine the maximum product sizes of two non-trivial cross $t$-intersecting families with a common $t$-transversal of size $t+1$.

\begin{prop}
Suppose that $\hf,\hg\subset \binom{[n]}{k}$ are non-trivial cross $t$-intersecting
and
\[
\hht_t^{(t+1)}(\hf)\cap \hht_t^{(t+1)}(\hg)\neq \emptyset.
\]
If $n\geq 6(t+1)k^2$, then
\begin{align}\label{nonemptycommon}
|\hf||\hg|\leq \max\left\{|\ha(n,k,t)|^2,|\hh(n,k,t)|^2\right\}.
\end{align}
\end{prop}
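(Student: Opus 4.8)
The plan is to exploit the common $(t+1)$-element $t$-transversal to reduce the problem to a statement about $(t+1)$-uniform families, where Proposition~\ref{prop-1.4} applies. Let $T$ be a set of size $t+1$ lying in $\hht_t^{(t+1)}(\hf)\cap\hht_t^{(t+1)}(\hg)$; thus every $F\in\hf$ and every $G\in\hg$ meets $T$ in at least $t$ elements, i.e.\ either $T\subset F$ or $|F\cap T|=t$, and likewise for $\hg$. Write $T=\{x_1,\dots,x_{t+1}\}$ and decompose
\[
\hf = \hf(T,T)\cup\bigcup_{i=1}^{t+1}\hf(T\setminus\{x_i\},\,T),\qquad
\hg = \hg(T,T)\cup\bigcup_{i=1}^{t+1}\hg(T\setminus\{x_i\},\,T),
\]
where $\hf(T,T)\subset\binom{[n]\setminus T}{k-t-1}$ collects the ``full'' sets and each $\hf(T\setminus\{x_i\},T)\subset\binom{[n]\setminus T}{k-t}$ collects those missing exactly $x_i$. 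First I would record the constraints the cross $t$-intersecting property imposes on these $2(t+2)$ pieces: $\hf(T\setminus\{x_i\},T)$ and $\hg(T\setminus\{x_j\},T)$ are cross-intersecting whenever $i\neq j$ (and cross $0$-intersecting, i.e.\ no constraint, is impossible since both sit in $[n]\setminus T$ — wait, they always intersect nothing is forced only when $i=j$), while $\hf(T\setminus\{x_i\},T)$ versus $\hg(T\setminus\{x_i\},T)$ carry no intersection constraint, and the full parts $\hf(T,T),\hg(T,T)$ are unconstrained against everything. The non-triviality of $\hf$ and $\hg$ means, in particular, that not all of $\hf$ can have $T$ (else $T$ would be a $t$-cover showing triviality only if $|T|=t$; here $|T|=t+1$, so I must instead use non-triviality to prevent $\hf$ from collapsing onto a single $(t+1)$-star or $t$-star after the reduction).

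The key step is a bounding argument on $\sum_i |\hf(T\setminus\{x_i\},T)|$ and the analogous sum for $\hg$. I would apply the Hilton-type cross-intersecting bounds from Section~2 — specifically the corollary giving \eqref{Hilton2} and Theorem with \eqref{FT92} — to the families $\{\hf(T\setminus\{x_i\},T)\}_i$ and $\{\hg(T\setminus\{x_j\},T)\}_j$, viewed as pairwise cross-intersecting systems in $\binom{[n]\setminus T}{k-t}$ on ground set of size $n-t-1\geq (t+1)(k-t)$ (which holds comfortably since $n\geq 6(t+1)k^2$). Roughly, if one of the families (say $\hf$) has all its ``missing-$x_i$'' parts small, then $|\hf|$ is dominated by $\binom{n-t-1}{k-t-1}$ plus a lower-order term, and I compare $|\hf||\hg|$ against $|\ha(n,k,t)|^2$ directly using \eqref{ineq-key2}--\eqref{ineq-key4}; if instead both $\hf$ and $\hg$ have a large missing-$x_i$ part, the cross-intersecting constraints between distinct indices force, via Kruskal--Katona / Hilton's Lemma, that essentially only one index $i$ can be ``heavy'' on each side and these must be the \emph{same} index, which pushes the structure toward $\hh(n,k,t)$ (the Hilton--Milner-type family), and again I compare with $|\hh(n,k,t)|^2$.

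The remaining case is when the missing-$x_i$ parts are all of intermediate size; here I would invoke Proposition~\ref{prop-1.4}. Contract: consider the $(t+1)$-uniform ``trace'' families obtained by looking at how $\hf,\hg$ sit over $T$ together with one extra coordinate — more precisely, if the sets were essentially $(t+1)$-element near $T$ we would be exactly in the exact cross $t$-intersecting $(t+1)$-uniform situation, and Proposition~\ref{prop-1.4} says the product of sizes is tiny (case (iii), bounded by $(t+2)^2/2$) unless we are in a sunflower (case (ii), which reconstructs $\ha(n,k,t)$) or one of the degenerate small cases (i). Feeding these three alternatives back through the multiplication by $\binom{n-t-1}{k-t-1}$-type factors and using $n\geq 6(t+1)k^2$ to kill error terms should yield \eqref{nonemptycommon} in every branch.

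I expect the main obstacle to be the middle branch: cleanly organizing the case analysis so that the cross-intersecting constraints \emph{between different indices} $i\neq j$ are fully used without double counting, and verifying that the ``heavy index must coincide'' claim really does hold in the stated range of $n$ — this is where Hilton's Lemma plus a careful Kruskal--Katona estimate on $\binom{n-t-1}{k-t}$ versus $\binom{n-t-2}{k-t-1}$ must be combined, and the bookkeeping of which configuration maps to $\ha$ and which to $\hh$ is delicate. The comparisons with $|\ha(n,k,t)|^2$ and $|\hh(n,k,t)|^2$ themselves are routine binomial manipulations via \eqref{ineq-key}--\eqref{ineq-key4}, so the real work is structural rather than computational.
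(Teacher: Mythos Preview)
Your decomposition over the common transversal $T$ and the observation that $\hf(T\setminus\{x_i\},T)$ and $\hg(T\setminus\{x_j\},T)$ are cross-intersecting in $\binom{[n]\setminus T}{k-t}$ for $i\neq j$ is exactly the starting point of the paper's proof, and the plan to apply the inequalities \eqref{Hilton2} and \eqref{FT92} is correct. However, two features of your plan do not survive contact with the actual argument.

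First, Proposition~\ref{prop-1.4} plays no role here. That proposition concerns $(t+1)$-uniform families that are \emph{exact} cross $t$-intersecting; it is used later (Case~4 of the main theorem) on the families $\ha_i=\hb_i^{(t+1)}$ of minimal $(t+1)$-transversals when $\ha_1\cap\ha_2=\emptyset$. In the present proposition we are given a \emph{common} $(t+1)$-transversal, so there is no $(t+1)$-uniform pair on which to invoke it, and your ``trace'' construction does not produce one. The paper's proof of this proposition is entirely a matter of applying the Section~2 cross-intersecting bounds to the $(k-t)$-uniform pieces $\hf_i,\hg_j$; no structural input from Proposition~\ref{prop-1.4} is needed or used.

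Second, your claim that the heavy index on the $\hf$ side and the heavy index on the $\hg$ side ``must be the same'' is backwards. If the largest piece of $\hg$ is $\hg_j$ with $j\neq 1$ (where $\hf_1$ is largest for $\hf$), then $\hf_1$ and $\hg_j$ \emph{are} cross-intersecting, and this is the \emph{easy} case: after using Hilton's Lemma to pass to lex-initial families one may assume by saturatedness that all $\hf_i=\hf_1$ and all $\hg_i=\hg_j$, and then Pyber's bound \eqref{ineq-pyber} together with \eqref{FT92} gives the $|\ha(n,k,t)|^2$ bound directly. The delicate case is rather when the heavy indices \emph{do} coincide (the paper's Case~1), because then the two largest pieces $\hf_1,\hg_1$ carry no mutual constraint; here the paper pairs $\hf_1$ with the second-largest $\hg_2$ (and $\hg_1$ with $\hf_2$) and runs a further subcase analysis using \eqref{Hilton2}, \eqref{FT92}, and \eqref{ineq-pyber}, with a preliminary reduction (Fact~3.4 in the paper) disposing of the situation where one side's largest piece is already below $\binom{n-t-3}{k-t-2}$. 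Your ``intermediate size'' branch should be replaced by this pairing argument, not by an appeal to Proposition~\ref{prop-1.4}.
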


\begin{proof}
Without loss of generality, assume that $[t+1]\in \hht_t^{(t+1)}(\hf)\cap \hht_t^{(t+1)}(\hg)$. Since $[t+1]$ is a common $t$-transversal, for $\hh=\hf$ or $\hg$
\begin{align}\label{ineq-total0}
|\hh|=\sum_{1\leq i\leq t+1} |\hh([t+1]\setminus\{i\},[t+1])|+|\hh([t+1])|.
\end{align}

 Assume that $\hf$ and $\hg$ form a saturated pair. Then
 \[
 \hh([t+1]) =\left\{H\setminus [t+1]\colon [t+1]\subset H \in \binom{[n]}{k}\right\}.
 \]
 I.e., the last term in \eqref{ineq-total0} is $\binom{n-t-1}{k-t-1}$.

 For $1\leq i\leq t+1$ set $\hh_i=\hh([t+1]\setminus \{i\},[t+1])$. For convenience let $X =[t+2,n]$. Then $\hh_i\subset \binom{X}{k-t}$. Note that \eqref{ineq-total0} can be rewritten as
 \begin{align}\label{ineq-total}
 |\hh|=\binom{n-t-1}{k-t-1} +\sum_{1\leq i\leq t+1}|\hh_i|.
 \end{align}
 Note  also that $\hf_i,\hg_j$ are cross-intersecting for $i\neq j$.

 \begin{fact}\label{fact3}
 If $\hf_i,\hg_j$ are cross-intersecting and $\hg_j$ is non-empty, then
 \[
 |\hf_i|\leq (k-t)\binom{n-t-1}{k-t-1}.
 \]
 \end{fact}
 \begin{proof}
  Let $G\in \hg_j$. Since each member in $\hf_i$ intersects $G$ and $|G| = k-t$, we infer
 \[
 |\hf_i|  \leq \binom{|X|}{k-t}-\binom{|X|-|G|}{k-t}\leq (k-t)\binom{|X|-1}{k-t-1}
 \leq(k-t)\binom{n-t-1}{k-t-1}.
 \]
 \end{proof}

 By Hilton's Lemma, we may assume that $\hh_i$ consists of the first $|\hh_i|$ members of $\binom{X}{k-t}$ in lexicographic order. By symmetry assume that
 \[
 |\hf_1|\geq |\hf_2|\geq \ldots\geq |\hf_{t+1}| \mbox{ and thereby } \hf_1\supset \hf_2\supset \ldots\supset \hf_{t+1}.
 \]
 Let $\hg^*$ be the one of $\hg_1,\hg_2,\ldots,\hg_{t+1}$ with the maximum size.
\begin{fact}\label{fact4}
 We may assume that $\min\{|\hf_1|,|\hg^*|\}\geq \binom{n-t-3}{k-t-2}$.
\end{fact}
 \begin{proof}
 By non-triviality of $\hf$, $\hg$, we see that both $\hf_1$ and $\hg^*$ are non-empty. Then by Fact \ref{fact3}
 \[
 |\hf_1|\leq (k-t)\binom{n-t-1}{k-t-1}\mbox{ and }|\hg^*|\leq (k-t)\binom{n-t-1}{k-t-1}.
 \]

If $\min\{|\hf_1|,|\hg^*|\}< \binom{n-t-3}{k-t-2}$, without loss of generality assume that $|\hg^*|< \binom{n-t-3}{k-t-2}$. Then
  for $n\geq 4(t+1)^2(k-t+1)+t+1$ we obtain that
 \begin{align*}
 |\hf||\hg|\leq &\left(\binom{n-t-1}{k-t-1}+(t+1)|\hf_1|\right) \left(\binom{n-t-1}{k-t-1}+(t+1)|\hg^*|\right)\\[5pt]
 \leq &\left(\binom{n-t-1}{k-t-1}+(t+1)(k-t)\binom{n-t-1}{k-t-1}\right) \left(\binom{n-t-1}{k-t-1}+(t+1)\binom{n-t-3}{k-t-2}\right)\\[5pt]
\leq & ((t+1)(k-t)+1)\binom{n-t-1}{k-t-1}^2+ ((t+1)(k-t)+1)(t+1)\binom{n-t-2}{k-t-2}\binom{n-t-1}{k-t-1}\\[5pt]
\leq & (t+1)(k-t+1)\binom{n-t-1}{k-t-1}^2+\frac{(t+1)^2(k-t+1)^2}{n-t-1}\binom{n-t-1}{k-t-1}^2\\[5pt]
\leq & \left(t+\frac{5}{4}\right)(k-t+1)\binom{n-t-1}{k-t-1}^2.
 \end{align*}
Then apply  \eqref{ineq-key3} with $c=4(t+1)$ and note that $\left(t+\frac{5}{4}\right)(t+1)<\left(t+\frac{1}{2}\right)(t+2)$ for $t\geq 2$, we obtain that
  \begin{align*}
 |\hf||\hg| &\leq  \left(t+\frac{5}{4}\right)\frac{4t+4}{4t+2}(k-t+1)\binom{n-k-1}{k-t-1}^2\nonumber\\[5pt]
 &\leq (t+2)(k-t+1)\binom{n-k-1}{k-t-1}^2\nonumber\\[5pt]
 &\leq \max\left\{|\ha(n,k,t)|^2,|\hh(n,k,t)|^2\right\}.
 \end{align*}
 Thus we may assume that $\min\{|\hf_1|,|\hg^*|\}\geq \binom{n-t-3}{k-t-2}$.
 \end{proof}

 Now we distinguish two cases.

 \vspace{6pt}
 {\noindent\bf Case 1.} $|\hg_1|\geq \max\limits_{2\leq i\leq t+1} |\hg_i|$.
 \vspace{6pt}

  Without loss of generality, assume that $|\hg_2|=\max\limits_{2\leq i\leq t+1} |\hg_i|$.
  Then $\hg_j\subset\hg_2$ for $3\leq j\leq t+1$. Since $\hf$ is non-trivial,
  we know that $\hf_2\neq \emptyset$. For otherwise $\hf_2=\hf_3=\ldots=\hf_{t+1}=\emptyset$,
  it follows that $\hf\subset \{F\in \binom{[n]}{k}\colon [2,t+1]\subset F\}$, contradicting the non-triviality of $\hf$. Similarly $\hg_2\neq \emptyset$.

  \vspace{6pt}
  {\noindent\bf Subcase 1.1.} $|\hf_1|\geq |\hg_2|$ and $|\hg_1|\geq |\hf_2|$.
  \vspace{6pt}

  Since $\hf_1,\hg_2$ is cross-intersecting and $|\hf_1|\geq |\hg_2|$, by \eqref{Hilton2} we obtain that
  \begin{align}\label{ineq-3.4}
  |\hf_1|+t|\hg_2| \leq \max\left\{(t+1)\binom{|X|-1}{k-t-1},\binom{|X|}{k-t}-\binom{|X|-(k-t)}{k-t}+t\right\}.
  \end{align}
  Similarly,
    \begin{align}\label{ineq-3.5}
  |\hg_1|+t|\hf_2| \leq \max\left\{(t+1)\binom{|X|-1}{k-t-1},\binom{|X|}{k-t}-\binom{|X|-(k-t)}{k-t}+t\right\}.
  \end{align}
 By \eqref{ineq-total}, \eqref{ineq-3.4} and \eqref{ineq-3.5}, we arrive at
\begin{align*}
|\hf|+|\hg|&=  2\binom{n-t-1}{k-t-1} +\sum_{1\leq i\leq t+1}|\hf_i|+\sum_{1\leq i\leq t+1}|\hg_i|\\[5pt] &\leq 2\binom{n-t-1}{k-t-1}+2\max\left\{(t+1)\binom{|X|-1}{k-t-1},\binom{|X|}{k-t}-\binom{|X|-(k-t)}{k-t}+t\right\}\\[5pt]
&= 2\max\left\{(t+2)\binom{n-t-2}{k-t-1}+\binom{n-t-2}{k-t-2}, t+\binom{n-t}{k-t}-\binom{n-k-1}{k-t}\right\}
\end{align*}
and \eqref{nonemptycommon} follows from $|\hf||\hg| \leq \left(\frac{|\hf|+|\hg|}{2}\right)^2$.

 \vspace{6pt}
 {\noindent\bf Subcase 1.2.} $|\hg_1|< |\hf_2|$ or $|\hf_1|< |\hg_2|$.
 \vspace{6pt}

 By  symmetry, we may assume that $|\hg_1|< |\hf_2|$. By Fact \ref{fact4} we see $|\hg_1|\geq  \binom{n-t-3}{k-t-2}$.
  Since $\hg_1,\hf_2$ are cross-intersecting, by \eqref{FT92} we have
 \begin{align}\label{ineq-part1}
 |\hg_1|+|\hf_2|\leq  2\binom{|X|-1}{k-t-1},
 \end{align}
 and thereby $|\hg_1|\leq \binom{n-t-2}{k-t-1}$.

  \vspace{6pt}
 {\noindent\bf Subcase 1.2.1. } $|\hg_2|\geq  \binom{n-t-3}{k-t-2}$.
 \vspace{6pt}

Since $\hg_2,\hf_1$ are cross-intersecting and $|\hf_1|\geq |\hg_2|\geq \binom{n-t-3}{k-t-2}$, by \eqref{FT92}
 \begin{align}\label{ineq-part2}
 |\hf_1|+|\hg_2| \leq 2\binom{|X|-1}{k-t-1}.
 \end{align}
By \eqref{ineq-part1} and \eqref{ineq-part2}, we obtain that
\begin{align*}
|\hf|+|\hg|&\leq 2\binom{n-t-1}{k-t-1}+ t(|\hg_1|+|\hf_2|) +( |\hf_1|+|\hg_2|)\\[5pt]
 &\leq 2\binom{n-t-1}{k-t-1}+ (2t+2)\binom{n-t-2}{k-t-1}\\[5pt]
 &= 2(t+2)\binom{n-t-2}{k-t-1}+2\binom{n-t-2}{k-t-2}\\[5pt]
  &= 2|\ha(n,k,t)|
\end{align*}
and \eqref{nonemptycommon} follows.

  \vspace{6pt}
 {\noindent\bf Subcase 1.2.2. } $|\hg_2|< \binom{n-t-3}{k-t-2}< \binom{n-t-2}{k-t-2}$.
 \vspace{6pt}

 By Fact \ref{fact3} we have $ |\hf_1| \leq (k-t)\binom{n-t-1}{k-t-1}$.   Since  $\hf_2,\hg_1$ are cross-intersecting, by \eqref{ineq-pyber} we obtain that
 \begin{align}\label{ineq-3.8}
 |\hf_2||\hg_1|\leq \binom{n-t-2}{k-t-1}^2\leq \binom{n-t-1}{k-t-1}^2.
 \end{align}
  Therefore,
 \begin{align*}
 |\hf||\hg|\leq &\left(\binom{n-t-1}{k-t-1}+t|\hf_2|+|\hf_1|\right) \left(\binom{n-t-1}{k-t-1}+t|\hg_2|+|\hg_1|\right)\nonumber\\[5pt]
\leq &\left((k-t+1)\binom{n-t-1}{k-t-1}+t|\hf_2|\right) \left(\binom{n-t-1}{k-t-1}+t\binom{n-t-2}{k-t-2}+|\hg_1|\right)\nonumber\\[5pt]
= &\left((k-t+1)\binom{n-t-1}{k-t-1}+t|\hf_2|\right) \left(\left(1+\frac{t(k-t-1)}{n-t-1}\right)\binom{n-t-1}{k-t-1}+|\hg_1|\right)\nonumber\\[5pt]
= &(k-t+1)\left(1+\frac{t(k-t-1)}{n-t-1}\right)\binom{n-t-1}{k-t-1}^2+t|\hf_2||\hg_1|\nonumber\\[5pt]
&\quad +\binom{n-t-1}{k-t-1}\left(\left(1 +\frac{t(k-t-1)}{n-t-1}\right)t|\hf_2|+(k-t+1)|\hg_1|\right).
\end{align*}
By \eqref{ineq-3.8}, it follows that
 \begin{align}\label{ineq-subcase1222}
 |\hf||\hg|\leq &(k+1)\binom{n-t-1}{k-t-1}^2+\frac{t(k-t)^2}{n-t-1}\binom{n-t-1}{k-t-1}^2\nonumber\\[5pt]
&\quad +\binom{n-t-1}{k-t-1}\left(\left(1 +\frac{t(k-t-1)}{n-t-1}\right)t|\hf_2|+(k-t+1)|\hg_1|\right).
 \end{align}
By \eqref{ineq-part1} and $|\hg_1|\leq \binom{n-t-1}{k-t-1}$, we have
\begin{align}\label{ineq-subcase1221}
&\left(1+\frac{t(k-t-1)}{n-t-1}\right)t|\hf_2|+(k-t+1)|\hg_1|\nonumber\\[5pt]
< & \left(1+\frac{t(k-t)}{n-t-1}\right)t\left(2\binom{n-t-1}{k-t-1}-|\hg_1|\right)+(k-t+1)|\hg_1|\nonumber\\[5pt]
\leq &\max\left\{2t+\frac{2t^2(k-t)}{n-t-1}, k+1+\frac{t^2(k-t)}{n-t-1}\right\}\binom{n-t-1}{k-t-1}.
\end{align}
Combining \eqref{ineq-subcase1221} and \eqref{ineq-subcase1222}, we arrive at
\begin{align*}\label{ineq-subcase1223}
 |\hf||\hg|
\leq &\left(\frac{t(k-t)^2}{n-t-1}+\max\left\{k+1+2t+\frac{2t^2(k-t)}{n-t-1}, 2(k+1)+\frac{t^2(k-t)}{n-t-1}\right\}\right)\\[5pt]
&\qquad\cdot\binom{n-t-1}{k-t-1}^2.
\end{align*}
Note that
\[
t(k-t)^2+2t^2(k-t)=t(k-t)(k+t)=t(k^2-t^2)\leq tk^2.
\]
For $n\geq tk^2+(t+1)$,
\begin{align}\label{ineq-subcase1223}
 |\hf||\hg|< & \binom{n-t-1}{k-t-1}^2\max\left\{k+2t+1+\frac{tk^2}{n-t-1}, 2(k+1)+\frac{tk^2}{n-t-1}\right\}\nonumber\\[5pt]
\leq&  \binom{n-t-1}{k-t-1}^2\max\left\{k+2t+2, 2k+3\right\}.
 \end{align}

 If $k\geq 2t+3$, then $k+2t+2< 2k+3$ and $2k+3= 4(k-\frac{k}{2}+\frac{3}{4})< 4(k-t+1)$.
 By applying  \eqref{ineq-key3} with $c=6$,  we obtain for $n\geq 6(k-t)^2+t+1$,
 \[
  |\hf||\hg|\leq (2k+3)\binom{n-t-1}{k-t-1}^2\leq 4(k-t+1)\binom{n-t-1}{k-t-1}^2\leq 6(k-t+1)\binom{n-k-1}{k-t-1}^2.
 \]
Since $k\geq 2t+3$ implies $k-t+1\geq t+4\geq 6$, we have
\[
 |\hf||\hg|\leq  6(k-t+1)\binom{n-k-1}{k-t-1}^2\leq (k-t+1)^2\binom{n-k-1}{k-t-1}^2<|\hh(n,k,t)|^2.
\]
If $2t\leq k\leq 2t+2$, then $k+2t+2\leq  2k+3$. By \eqref{ineq-subcase1223} we have
 \[
  |\hf||\hg|\leq (2k+3)\binom{n-t-1}{k-t-1}^2\leq (4t+7)\binom{n-t-1}{k-t-1}^2.
 \]
 Applying \eqref{ineq-key4} with $c=8(t+2)$, we arrive at
 \[
  |\hf||\hg|\leq (4t+7)\frac{8(t+2)}{8(t+2)-2}\binom{n-t-2}{k-t-1}^2= 4(t+2)\binom{n-t-2}{k-t-1}^2<|\ha(n,k,t)|^2.
 \]
 If $k\leq 2t-1$, then $k+2t+2\geq  2k+3$. By \eqref{ineq-subcase1223} we have
 \[
  |\hf||\hg|\leq (k+2t+2)\binom{n-t-1}{k-t-1}^2< (4t+4)\binom{n-t-1}{k-t-1}^2.
 \]
 Applying \eqref{ineq-key4} with $c=2(t+2)$, we arrive at
 \[
  |\hf||\hg|\leq 4(t+1)\frac{2(t+2)}{2(t+2)-2}\binom{n-t-2}{k-t-1}^2= 4(t+2)\binom{n-t-2}{k-t-1}^2<|\ha(n,k,t)|^2.
 \]

\vspace{6pt}
 {\noindent\bf Case 2.}  $|\hg_1|\leq \max\limits_{2\leq i\leq t+1} |\hg_i|$.
 \vspace{6pt}

 Without loss of generality, assume that $|\hg_2|=\max\limits_{2\leq i\leq t+1} |\hg_i|$. Then $|\hg_2|=\max\limits_{1\leq i\leq t+1} |\hg_i|$ and $\hg_i\subset \hg_2$ for all $1\leq i\leq t+1$. Since $\hf_i\subset \hf_1$, $\hg_i\subset \hg_2$, $\hf_i$ and $\hg_i$ are cross-intersecting for $1\leq i\leq t+1$, by saturatedness we may assume that $|\hf_i|= |\hf_1|$ and $|\hg_i|= |\hg_2|$ for all $i\in [t+1]$. Then \eqref{nonemptycommon} is equivalent to
 \begin{align}\label{ineq-3}
 \left(\binom{n-t-1}{k-t-1}+(t+1)|\hf_1|\right) \left(\binom{n-t-1}{k-t-1}+(t+1)|\hg_2|\right)\leq \max\{|\ha(n,k,t)|^2,|\hh(n,k,t)|^2\}.
 \end{align}

 Without loss of generality, assume that $|\hf_1|\leq |\hg_2|$.
 By Fact \ref{fact4} we have $|\hf_1|\geq \binom{n-t-3}{k-t-2}$.
 Since $\hf_1,\hg_2$ are cross-intersecting, by \eqref{FT92} we have
 $|\hf_1|+|\hg_2|\leq 2\binom{|X|-1}{k-t-1}$.
By \eqref{ineq-pyber} $|\hf_1||\hg_2|\leq \binom{|X|-1}{k-t-1}^2$. Then expanding \eqref{ineq-3}:
 \begin{align*}
 LHS&\leq \binom{n-t-1}{k-t-1}^2+2\binom{n-t-1}{k-t-1}(t+1)\binom{n-t-2}{k-t-1}
 +\left((t+1)\binom{n-t-2}{k-t-1}\right)^2\\[5pt]
 &=|\ha(n,k,t)|^2,
 \end{align*}
and the proposition is proven.
\end{proof}

\section{The basis of cross $t$-intersecting families}

In this section, we prove an inequality concerning the size of  basis of cross $t$-intersecting families by a branching process.

We need the following  notion of  basis.  Let $\hb(\hf)$ be the family of minimal (for containment) sets in $\hht_t(\hg)$ and let $\hb(\hg)$ be the family of minimal sets in
$\hht_t(\hf)$.

Let us prove some properties of the basis.

\begin{lem}\label{lem3-1}
Suppose that $\hf,\hg\subset \binom{[n]}{k}$ form a saturated pair of cross $t$-intersecting families. Then (i) and (ii) hold.
\begin{itemize}
  \item[(i)] Both $\hb(\hf)$ and $\hb(\hg)$ are antichains, and $\hb(\hf), \hb(\hg)$ are cross $t$-intersecting,
  \item[(ii)] $\hf=\left\{F\in \binom{[n]}{k}\colon \exists B\in \hb(\hf), B\subset F\right\}$ and $\hg=\left\{G\in \binom{[n]}{k}\colon \exists B\in \hb(\hg), B\subset G\right\}$.
\end{itemize}
\end{lem}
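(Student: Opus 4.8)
The plan is to prove the two parts in sequence, starting from the saturatedness hypothesis, which is the engine behind everything.

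\textbf{Setting up.} Recall that $\hb(\hf)$ consists of the minimal members of $\hht_t(\hg)$ and $\hb(\hg)$ of the minimal members of $\hht_t(\hf)$ (note the deliberate cross-labelling in the definition). Since $\hf,\hg$ are cross $t$-intersecting, every $F\in\hf$ satisfies $|F\cap G|\ge t$ for all $G\in\hg$, i.e.\ $F\in\hht_t^{(k)}(\hg)\subset\hht_t(\hg)$; symmetrically every $G\in\hg$ lies in $\hht_t(\hf)$. So each $F\in\hf$ contains some minimal $t$-transversal of $\hg$, that is, some $B\in\hb(\hf)$, and each $G\in\hg$ contains some $B\in\hb(\hg)$. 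This already gives the ``$\subseteq$'' direction of part (ii).

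\textbf{Part (i).} That $\hb(\hf)$ and $\hb(\hg)$ are antichains is immediate from the definition as families of minimal sets. For the cross $t$-intersecting property, take $B_1\in\hb(\hf)$ and $B_2\in\hb(\hg)$; I must show $|B_1\cap B_2|\ge t$. Here is where saturatedness enters. First I would argue that every $B\in\hb(\hf)$ has size $|B|\le k$ and is contained in some member of $\hf$: indeed $B$ is a minimal element of $\hht_t(\hg)$, and one checks that a minimal $t$-transversal of $\hg$ has size at most $k$ (a single $G\in\hg$ already furnishes a $t$-transversal of size $k$, namely... no — rather, using that $\hg\ne\emptyset$ and taking any $t$ elements of a fixed $G\in\hg$ is not a transversal in general, so instead: $\hht_t(\hg)$ contains sets of size $\le k$ because $\hf\subseteq\hht_t(\hg)$ and $\hf\ne\emptyset$, hence every minimal member has size $\le k$). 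Then extend $B_1$ to a $k$-set $F\supseteq B_1$; since $B_1$ $t$-intersects every member of $\hg$, so does $F$, and by saturatedness $F\in\hf$ (adding $F$ to $\hf$ would not destroy the property, so $F$ is already there). Likewise extend $B_2$ to $G\in\hg$. Now $F\in\hf$ and $G\in\hg$, so $|F\cap G|\ge t$. This is not quite $|B_1\cap B_2|\ge t$, so I need to be more careful: instead, observe that $B_1\in\hht_t(\hg)$ means $|B_1\cap G|\ge t$ for the particular $G\supseteq B_2$ just constructed; but that still only gives $|B_1\cap G|\ge t$, not intersection with $B_2$. The correct route is: $B_2$ is a \emph{minimal} $t$-transversal of $\hf$, and $F\in\hf$, so $|F\cap B_2|\ge t$; but I want to replace $F$ by $B_1$. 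Since $B_1$ is a minimal $t$-transversal of $\hg$ and $G\supseteq B_2$ with $G\in\hg$, we get $|B_1\cap G|\ge t$. To push the intersection down to $B_2$, use minimality of $B_2$: if $|B_1\cap B_2|<t$, then $B_2\setminus B_1$ is nonempty and, I claim, $B_2\setminus\{x\}$ is still a $t$-transversal of $\hf$ for a suitable $x\in B_2\setminus B_1$, contradicting minimality. Establishing that claim — that some element of $B_2$ outside $B_1$ is redundant — is the technical heart, and it again uses saturatedness to guarantee that the relevant $k$-set extensions of $B_1$ lie in $\hf$.

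\textbf{Part (ii).} Write $\hf' = \{F\in\binom{[n]}{k}\colon \exists B\in\hb(\hf), B\subseteq F\}$. The inclusion $\hf\subseteq\hf'$ was shown above. For $\hf'\subseteq\hf$: take $F\in\hf'$, so $B\subseteq F$ for some $B\in\hb(\hf)\subseteq\hht_t(\hg)$; then $|F\cap G|\ge|B\cap G|\ge t$ for all $G\in\hg$, so $F$ is cross $t$-intersecting with all of $\hg$, and saturatedness forces $F\in\hf$. The symmetric statement for $\hg$ is identical.

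\textbf{Main obstacle.} I expect the genuinely delicate point to be the cross $t$-intersecting claim in part (i) — specifically the minimality argument showing $|B_1\cap B_2|\ge t$. The clean way is probably: suppose $|B_1\cap B_2|=s<t$; extend $B_1\cup(\text{some }t-s\text{ elements chosen greedily})$... actually the slickest argument uses that $\hht_t(\hg)$ is upward closed and that the \emph{pair} $(\hb(\hf),\hb(\hg))$ inherits cross $t$-intersection directly from $(\hf,\hg)$ via the two-sided containment just proved in (ii) — so I would prove (ii) first, then derive (i)'s cross-intersection as a corollary: given $B_1\in\hb(\hf)$, $B_2\in\hb(\hg)$, extend them to $F\in\hf$, $G\in\hg$ using (ii), note $|F\cap G|\ge t$, and then invoke minimality of $B_1$ (it is a minimal $t$-transversal of $\hg$, and $G\in\hg$, forcing the intersection pattern) together with minimality of $B_2$ to conclude; making that last deduction rigorous, rather than hand-wavy, is the step to watch.
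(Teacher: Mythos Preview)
Your treatment of part (ii) and the antichain claim in (i) is fine. The gap is exactly where you flag it: the cross $t$-intersection $|B_1\cap B_2|\ge t$ for $B_1\in\hb(\hf)$, $B_2\in\hb(\hg)$. Your proposed fix --- assume $|B_1\cap B_2|<t$ and show that $B_2\setminus\{x\}$ remains a $t$-transversal of $\hf$ for some $x\in B_2\setminus B_1$, contradicting minimality of $B_2$ --- is aimed at the wrong target. Minimality of $B_2$ says precisely that \emph{no} element is redundant, so you would be trying to establish a statement that need not hold, and there is no evident mechanism to force it from the hypothesis $|B_1\cap B_2|<t$.

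The paper's argument is shorter and does not use minimality of $B_2$ at all, only that $B_2\in\hht_t(\hf)$. Assume $|B_1\cap B_2|<t$. If $|B_1|<k$, extend $B_1$ to a $k$-set $F$ by adding elements from $[n]\setminus(B_1\cup B_2)$, so that $F\cap B_2=B_1\cap B_2$ and hence $|F\cap B_2|<t$. Since $F\supseteq B_1\in\hht_t(\hg)$, saturatedness gives $F\in\hf$; but then $|F\cap B_2|<t$ contradicts $B_2$ being a $t$-transversal of $\hf$. (If $|B_1|=|B_2|=k$ one gets $B_1\in\hf$, $B_2\in\hg$ directly and the contradiction is immediate.) The point you were circling --- ``saturatedness guarantees the relevant $k$-set extensions of $B_1$ lie in $\hf$'' --- is exactly right; what you were missing is that the extension should be chosen to \emph{avoid} $B_2$, after which the contradiction is with $B_2\in\hht_t(\hf)$ itself, not with its minimality.
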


\begin{proof}
 (i) Clearly,  $\hb(\hf)$ and $\hb(\hg)$ are both antichains. Suppose for contradiction
 that $B\in \hb(\hf), B'\in \hb(\hg)$ but $|B\cap B'|<t$. If $|B|=|B'|=k$, then
 $B\in \hf$, $B'\in \hg$ follow from saturatedness, a contradiction. If $|B|<k$,
 then there exists $F\supset B$ such that $|F|=k$ and $|F\cap B'|=|B\cap B'|<t$.
 By definition $F\in \hht_t(\hg)$. Since $\hf,\hg$ are saturated, we see that $F\in \hf$.
 But this contradicts the assumption that $B'$ is a $t$-transversal of $\hf$.
 Since $\hf,\hg$ are saturated, (ii) is immediate from the definition of $\hb(\hf)$ and  $\hb(\hg)$.
\end{proof}

Let $s(\hb)=\min\{|B|\colon B\in \hb\}$. For any $\ell$ with $s(\hb)\leq \ell \leq k$, define
\[
\hb^{(\ell)} = \left\{B\in \hb\colon |B|=\ell\right\} \mbox{ and } \hb^{(\leq \ell)} = \bigcup_{i=s(\hb)}^\ell\hb^{(i)}.
\]
It is easy to see that $s(\hb(\hg))=\tau_t(\hf)$.

By  a branching process, we establish an inequality concerning the size of the basis.

\begin{lem}\label{lem3-2}
Suppose that $\hf,\hg\subset \binom{[n]}{k}$ are a saturated pair of cross $t$-intersecting families.
Let $\hb_1=\hb(\hf)$ and $\hb_2=\hb(\hg)$. For each $i=1,2$, if $s(\hb_i)\geq t+1$ and there exists $r_i\geq s(\hb_i)$ such that
$\tau_t(\hb_i^{(\leq r_i)})\geq t+1$  then
\begin{align}\label{ineq-crosshb}
\sum_{r_i\leq \ell\leq k}\left(\binom{s(\hb_i)}{t}r_i k^{\ell-t-1}\right)^{-1}|\hb_{3-i}^{(\ell)}|\leq 1
\end{align}
and
\begin{align}\label{ineq-crosshb2}
\sum_{s_i\leq \ell\leq k}\left(\binom{s(\hb_i)}{t}k^{\ell-t}\right)^{-1}|\hb_{3-i}^{(\ell)}|\leq 1.
\end{align}
\end{lem}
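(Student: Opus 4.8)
The plan is to run a branching process on the families $\hb_{3-i}^{(\ell)}$ that produces, for each set $B$ in the basis, a ``fractional'' amount of weight at most $1$ in total, where the weight is calibrated exactly so that the coefficients in \eqref{ineq-crosshb} and \eqref{ineq-crosshb2} fall out. Fix $i$ and write $s=s(\hb_i)$, $r=r_i$, $\hb=\hb_i$, $\hb'=\hb_{3-i}$. Start from a fixed $t$-element subset $T_0$ of some $B_0\in\hb_i^{(\leq r)}$ (this is where $s\ge t+1$ and $\tau_t(\hb_i^{(\leq r)})\ge t+1$ enter: the hypotheses guarantee such structure is present and that no single $t$-set handles everything, so the branching cannot terminate trivially). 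At the root we have $\binom{s}{t}$ choices of which $t$-subset of a minimal basis set of size $\le r$ to start from (or, for \eqref{ineq-crosshb2}, a slightly different bookkeeping giving $\binom{s}{t}$ as well). Because $\hb_i,\hb_{3-i}$ are cross $t$-intersecting (Lemma \ref{lem3-1}(i)), every $B'\in\hb'$ meets the $t$-set $T_0$ already chosen in... no, that is not quite it — $B'$ meets every member of $\hb_i$ in $\ge t$ elements, but not necessarily the particular $t$-set $T_0$. So the branching must instead be: having chosen a partial transversal, we pick a member $B$ of $\hb_i$ not yet $t$-covered, branch over the (at most $k$, in fact at most $\binom{|B|}{t}$ but we bound crudely) ways to add enough elements of $B$ to reach a $t$-intersection with it, and continue. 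Each $B'\in\hb'^{(\ell)}$ is then reached along some branch of length governed by $\ell$.

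The key steps, in order: (1) set up the branching tree whose root fans out into $\binom{s}{t}$ (resp.\ $\binom{s}{t}$) children corresponding to the initial choice from a minimal basis element; (2) at each subsequent node, if the current partial set $S$ already has $|S\cap B'|\ge t$ declare $B'$ ``captured'' at this leaf, otherwise branch by adding one element — there are at most $k$ choices at each internal step and the depth needed to capture a member of $\hb'^{(\ell)}$ is at most $\ell-t$ (we have already spent $t$ elements at the root, or for the $r_i$-version we use the refinement that the first branching beyond the root costs a factor $r_i$ because we may restrict to a minimal basis element of size $\le r_i$, and thereafter a factor $k$ per level); (3) assign to each $B'\in\hb'^{(\ell)}$ the weight $\bigl(\binom{s}{t}\,r\,k^{\ell-t-1}\bigr)^{-1}$ (resp.\ $\bigl(\binom{s}{t}\,k^{\ell-t}\bigr)^{-1}$), which is at most the reciprocal of the number of leaves of the subtree that could have led to it; (4) observe that distinct $B'$ are captured at distinct leaves (or distinct enough that their weights sum to at most the total leaf measure $1$), since once a branch captures some $B'$ that leaf is not reused — here one uses that $\hb'$ is an antichain (Lemma \ref{lem3-1}(i)), so no $B'$ is a subset of another and the capture is unambiguous; (5) sum over all $\ell$ to get the stated inequality.

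The main obstacle I expect is step (4), the injectivity/measure bookkeeping: one must argue carefully that the assigned weights really do sum to at most $1$, i.e.\ that the branching tree is ``wide enough'' that each basis member of size $\ell$ consumes at least a $\bigl(\binom{s}{t}rk^{\ell-t-1}\bigr)^{-1}$ fraction of the total branching measure and that these fractions are disjoint. The subtlety is that several members of $\hb_{3-i}$ may be captured along the same root-to-leaf path (a longer set and a shorter one with which it shares its first few elements), and one must charge them to different leaves or different portions of the tree; the antichain property prevents containment but one still needs the branching to be set up so that capturing $B'$ forces a commitment of $|B'|-t$ distinct branching decisions beyond the root. A secondary point requiring care is the exact constant at the root — justifying $\binom{s}{t}$ rather than something larger requires that the very first step may be confined to choosing a $t$-subset of a single minimal basis element of minimum size $s$ (resp.\ of size $\le r_i$ for the factor $r_i$), which is legitimate precisely because $\tau_t(\hb_i^{(\le r_i)})\ge t+1$ guarantees that element's $t$-subsets do not already cover $\hb_{3-i}$, so the process genuinely continues and the counting is tight. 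Once these are pinned down, summing the geometric-type bound over $r_i\le\ell\le k$ (resp.\ $s_i\le\ell\le k$) gives \eqref{ineq-crosshb} and \eqref{ineq-crosshb2} directly.
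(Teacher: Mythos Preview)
Your approach is the paper's branching process, and the architecture is right: seed with the $\binom{s}{t}$ many $t$-subsets of a minimum-size element of $\hb_i$, then branch one element at a time against members of $\hb_i$ not yet $t$-covered. But two things are off. The stopping rule is governed entirely by $\hb_i$, not by $\hb_{3-i}$: a sequence $S$ becomes a leaf when $|\widehat S\cap B|\ge t$ for \emph{every} $B\in\hb_i$, not when it ``captures'' some particular $B'$. Relatedly, the hypothesis $\tau_t(\hb_i^{(\le r_i)})\ge t+1$ has nothing to do with covering $\hb_{3-i}$; it guarantees that after any initial $t$-subset is chosen, some member of $\hb_i^{(\le r_i)}$ remains uncovered, so the \emph{first} post-root branching may be taken inside a set of size at most $r_i$, which is exactly where the factor $r_i$ (rather than $k$) in \eqref{ineq-crosshb} comes from.

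The concern you flag in step~(4) dissolves once the mechanism is stated correctly, and this is the point you are genuinely missing. The claim is not that each $B'\in\hb_{3-i}^{(\ell)}$ is captured somewhere along a path; it is that there is a surviving sequence $S$ with $\widehat S=B'$ \emph{exactly}. Two ingredients make this work: (a) since $\hb_i,\hb_{3-i}$ are cross $t$-intersecting (Lemma~\ref{lem3-1}(i)), at every branching step against some $B\in\hb_i$ one may choose the new element inside $B\cap B'$, so the path can stay within $B'$; and (b) since $B'$ is a \emph{minimal} $t$-transversal of the opposite family, any proper subset $\widehat S\subsetneq B'$ fails to $t$-cover some member of that family, hence via Lemma~\ref{lem3-1}(ii) fails to $t$-cover some $B\in\hb_i$, so the branching does not halt before $\widehat S=B'$. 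Distinct $B'$ then give distinct surviving sequences simply because their underlying sets differ; no delicate leaf-disjointness argument is needed, and the antichain property is not what carries this step.
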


\begin{proof}
By symmetry, it is sufficient to prove the lemma only for $i=1$.
For the proof we use a branching process. During the proof {\it a sequence}
$S=(x_1,x_2,\ldots,x_\ell)$ is an ordered sequence of distinct elements of $[n]$
and we use $\widehat{S}$ to denote the underlying unordered set $\{x_1,x_2,\ldots,x_\ell\}$.
At the beginning, we assign weight 1 to the empty sequence $S_{\emptyset}$. At the first stage,
we choose $B_{1,1}\in \hb_1$ with $|B_{1,1}|=s(\hb_1)\geq t+1$. For any $t$-subset
$\{x_1,x_2,\ldots,x_t\}\subset B_{1,1}$, define one sequence $(x_1,x_2,\ldots,x_t)$ and
assign the weight $\binom{s(\hb_1)}{t}^{-1}$ to it.

At the second stage, since $\tau_t(\hb_1^{(\leq r_1)})\geq t+1$,
for each sequence $S=(x_1,\ldots,x_t)$ we may choose $B_{1,t+1}\in \hb_1^{(\leq r_1)}$
such that $|\widehat{S}\cap B_{1,t+1}|<t$. Then we replace $S=(x_1,\ldots,x_t)$ by
$|B_{1,t+1}\setminus \widehat{S}|$ sequences of the form $(x_1,\ldots,x_t,y)$ with
$y\in B_{1,t+1}\setminus \widehat{S}$ and weight $\frac{w(S)}{|B_{1,t+1}\setminus \widehat{S}|}$.

In each subsequent stage, we pick a sequence $S=(x_1,\ldots,x_p)$ and denote its weight by $w(S)$. If $|\widehat{S}\cap B_1|\geq t$ holds for all $B_1\in \hb_1$ then we do nothing. Otherwise we pick $B_1\in \hb_1$ satisfying $|\widehat{S}\cap B_1|<t$ and replace $S$ by the $|B_1\setminus \widehat{S}|$ sequences $(x_1,\ldots,x_p,y)$ with $y\in B_1\setminus \widehat{S}$ and assign weight $\frac{w(S)}{|B_1\setminus \widehat{S}|}$ to each of them. Clearly, the total weight is always 1.

We continue until $|\widehat{S}\cap B_1|\geq t$ for all sequences and all $B_1\in \hb_1$. Since $[n]$ is finite, each sequence has length at most $n$ and  eventually the process stops. Let $\hs$ be the collection of sequences that survived in the end of the branching process and let $\hs^{(\ell)}$ be the collection of sequences in $\hs$ with length $\ell$.

\begin{claim}
To each $B_2\in \hb_{2}^{(\ell)}$ with $\ell\geq r_1$ there is some sequence $S\in \hs^{(\ell)}$ with $\widehat{S}=B_2$.
\end{claim}
\begin{proof}
Let us suppose the contrary and let $S=(x_1,\ldots,x_p)$ be a sequence of maximal length that occurred
at some stage of the branching process satisfying $\widehat{S}\subsetneqq B_2$. Since $\hb_1,\hb_2$ are cross $t$-intersecting,
 $|B_{1,1}\cap B_2|\geq  t$, implying that such an $S$ exists. By the choice of $S$ we see that $p\geq t$. Since $\widehat{S}$ is a
 proper subset of $B_2$ and $B_2\in \hb_2=\hb(\hg)$, it follows that
 $\widehat{S}\notin \hb(\hg)\subset\hht(\hf)$. Thereby there exists $F\in \hf$ with
 $|\widehat{S} \cap F|< t$.  In view of Lemma \ref{lem3-1} (ii), we can find  $B_1'\in \hb_1$
 such that $|\widehat{S} \cap B_1'|<t$. Thus at some point we picked $S$ and
 some $\tilde{B}_1\in \hb_1$ with $|\widehat{S} \cap \tilde{B}_1|<t$. Since $\hb_1,\hb_2$
 are cross $t$-intersecting, $|B_2\cap \tilde{B}_1|\geq t$. Consequently, for each
 $y\in B_2\cap \tilde{B}_1$ the sequence $(x_1,\ldots,x_p,y)$ occurred in the branching process.
 This contradicts the maximality of $p$. Hence there is an $S$ at some stage satisfying
 $\widehat{S}= B_2$. Since $\hb_1,\hb_2$ are cross $t$-intersecting,
 $|\widehat{S}\cap B_1'|=|B_2\cap B_1'|\geq t$ for all $B_1'\in \hb_1$.
 Thus $\widehat{S}\in \hs$ and the claim holds.
\end{proof}
By Claim 1, we see that $|\hb_2^{(\ell)}|\leq |\hs^{(\ell)}|$ for all $\ell\geq r_1$.
Let $S=(x_1,\ldots,x_\ell)\in \hs^{(\ell)}$ and let $S_i=(x_1,\ldots,x_i)$ for $i=t,\ldots,\ell$.
At the first stage, $w(S_t)=1/\binom{s(\hb_1)}{t}$.   Assume that $B_{1,i}$ is the selected set
when replacing $S_{i-1}$ in the  branching process for $i=t+1,\ldots,\ell$. Clearly,
$x_i\in B_{1,i}$ for $i\geq t+1$, $B_{1,t+1}\in \hb_1^{(\leq r_1)}$ and
\[
w(S)= \binom{s(\hb_1)}{t}^{-1}\prod_{i=t+1}^\ell \frac{1}{|B_{1,i}\setminus \widehat{S_{i-1}}|}.
\]
Note that $|B_{1,t+1}\setminus \widehat{S_t}|\leq r_1$ and $|B_{1,i}\setminus \widehat{S_{i-1}}|\leq k$ for $i\geq t+2$. It follows that
\[
w(S)\geq \left(\binom{s(\hb_1)}{t}r_1 k^{\ell-t-1}\right)^{-1}.
\]
Thus we obtain that
\[
\sum_{r_1\leq \ell\leq k}\left(\binom{s(\hb_1)}{t}r_1 k^{\ell-t-1}\right)^{-1}|\hb_{2}^{(\ell)}|\leq \sum_{r_1\leq \ell\leq k}\sum_{S\in \hs^{(\ell)}}w(S)\leq \sum_{S\in \hs}w(S)=1
\]
and \eqref{ineq-crosshb} holds.

If we omit the second stage in the branching process, then by the similar argument we can obtain
 \eqref{ineq-crosshb2}.
\end{proof}

\section{The proof of the main theorem}

In this section, we determine the maximum product of  the sizes of two non-trivial cross $t$-intersecting families.

\begin{lem}\label{lem-5.1}
Let $\hf,\hg\subset \binom{[n]}{k}$ be a saturated pair of non-trivial cross $t$-intersecting families. Set
$\hb_1=\hb(\hf),\hb_2=\hb(\hg)$. Then neither $\hb_1$ nor $\hb_2$ contains a sunflower of  $k-t+2$ petals with center of size $t$.
\end{lem}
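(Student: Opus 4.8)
The plan is to argue by contradiction, using the symmetry between the two families, so it suffices to rule out a sunflower in $\hb_1=\hb(\hf)$. Suppose $B_1,\dots,B_m\in\hb_1$ form a sunflower with $m=k-t+2$ petals and center $Y$, $|Y|=t$; write $B_i=Y\cup P_i$ with $P_i=B_i\setminus Y$. The sets $Y,P_1,\dots,P_m$ are then pairwise disjoint, and each $P_i$ is nonempty: indeed $s(\hb_1)=\tau_t(\hg)\ge t+1$ by the non-triviality of $\hg$, so every member of $\hb_1$ has size at least $t+1>|Y|$.

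The key step is to show that $Y\subseteq B$ for every $B\in\hb_2=\hb(\hg)$. By Lemma \ref{lem3-1}(i), $\hb_1$ and $\hb_2$ are cross $t$-intersecting, so $|B\cap B_i|\ge t$ for all $i$. Suppose $Y\not\subseteq B$ and put $j=|B\cap Y|\le t-1$. Then $|B\cap P_i|\ge t-j\ge 1$ for each $i$, and since $B\cap Y$ and the $B\cap P_i$ are pairwise disjoint subsets of $B$,
\[
|B|\ \ge\ j+m(t-j)\ =\ mt-(m-1)j\ \ge\ mt-(m-1)(t-1)\ =\ m+t-1\ =\ k+1,
\]
which contradicts the fact that every member of $\hb_2\subset\hht_t(\hf)=\bigcup_{t\le\ell\le k}\hht_t^{(\ell)}(\hf)$ has size at most $k$. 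Hence $Y\subseteq B$ for all $B\in\hb_2$.

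Finally, since $\hf,\hg$ form a saturated pair, Lemma \ref{lem3-1}(ii) gives $\hg=\{G\in\binom{[n]}{k}\colon\exists B\in\hb_2,\ B\subseteq G\}$, and $\hb_2\neq\emptyset$ because $\hg\neq\emptyset$. Every $G\in\hg$ therefore contains some $B\in\hb_2$, whence $Y\subseteq B\subseteq G$; thus $\bigcap\{G\colon G\in\hg\}\supseteq Y$ has size at least $t$, contradicting the non-triviality of $\hg$. Swapping the roles of $(\hf,\hb_1)$ and $(\hg,\hb_2)$ (and using the non-triviality of $\hf$) shows $\hb_2$ contains no such sunflower either, completing the proof. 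I do not anticipate a genuine obstacle here: the whole argument turns on the one displayed inequality, and the only bookkeeping — that the petals are nonempty and that $\hb_2$ is nonempty — follows immediately from non-triviality and the saturatedness structure already recorded in Lemma \ref{lem3-1}.
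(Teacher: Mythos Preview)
Your proof is correct and follows essentially the same idea as the paper's: if a sunflower with center $Y$ of size $t$ and $k-t+2$ petals sits in $\hb_1$, then any set that $t$-intersects all of its members must contain $Y$ (by the pigeonhole-type count you display), and this forces $Y$ into every $G\in\hg$, contradicting non-triviality. The only cosmetic difference is that you first show $Y\subseteq B$ for all $B\in\hb_2$ and then invoke Lemma~\ref{lem3-1}(ii), whereas the paper applies the counting directly to $G\in\hg$ (using that each member of $\hb_1$ is a $t$-transversal of $\hg$); both routes are equivalent and equally short.
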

\begin{proof}
Suppose for contradiction that there exists a sunflower of $k-t+2$ petals with center of size $t$ in $\hb_1$. Without loss of generality, assume that $[t]\cup A_1,\ldots, [t]\cup A_{k-t+2}$ is such a sunflower. Then for each $G\in \hg$, by the definition of $\hb(\hf)$ we have $|G\cap ([t]\cup A_j)|\geq t$ for $j=1,\ldots,k-t+2$. It follows that $[t]\subset G$, contradicting the non-triviality of $\hg$.
\end{proof}
\begin{lem}
Let $\hf,\hg\subset \binom{[n]}{k}$ be a saturated pair of non-trivial cross $t$-intersecting families. Set
$\ha_1=\hht_t^{(t+1)}(\hg),\ha_2=\hht_t^{(t+1)}(\hf)$. If $s(\hb(\hf))=s$, then
\begin{align}\label{ineq-hbub}
|\ha_2| \leq \binom{s}{t} (k-t+1).
\end{align}
If $\ha_1\neq \emptyset$ and $\ha_1\cap \ha_2= \emptyset$, then
\begin{align}\label{ineq-hbub2}
|\ha_2| \leq 2 (k-t+1).
\end{align}
\end{lem}
\begin{proof}
Let us prove \eqref{ineq-hbub} first. Since $s(\hb(\hf))=s$, there exists an $S\in \hb(\hf)$ with $|S|=s$. Note that  by Lemma \ref{lem3-1} (i) we have $|A\cap S|\geq t$ for each $A\in \ha_2$. Then
\[
|\ha_2| \leq \sum_{T\in \binom{S}{t}} |\ha_2(T)|.
\]
  By Lemma \ref{lem-5.1} we see that  $|\ha_2(T)|\leq k-t+1$ and \eqref{ineq-hbub} follows.

Now we prove \eqref{ineq-hbub2}. Without loss of generality, let $[t+1]\in \ha_1$. Then $|D\cap [t+1]|\geq t$ for all
$D\in \ha_2$. Should $|F\cap [t+1]|\geq t$ hold for all $F\in \hf$, we get $[t+1]\in\ha_1\cap \ha_2$, contradicting $\ha_1\cap \ha_2=\emptyset$.
I.e., we may fix $F_0$ with $|F_0\cap[t+1]|\leq t-1$. Let $\ha_2=\{D_1,\ldots,D_q\}$ and define $x_i$
by  $D_i\setminus [t+1]=\{x_i\}$. Note that
\[
t\leq |F_0\cap D_i|\leq |F_0\cap [t+1]|+|F_0\cap \{x_i\}|\leq t-1+1\leq t.
\]
Thus $|F_0\cap [t+1]|=t-1$, $x_i\in F_0$ and  $F_0\cap [t+1]\subset D_i$. Since $|D_i\cap [t+1]|=t$, it follows that there are only two choices for $D_i\cap [t+1]$. Then by Lemma \ref{lem-5.1} we conclude that $|\ha_2|\leq 2(k-t+1)$.
\end{proof}

\begin{proof}[Proof of Proposition \ref{prop-1.4}]
Let us prove the following two facts.
\begin{fact}\label{fact1}
For any $A,A'\in \ha$, $|A\cap A'|=t-1$ or $t$.
\end{fact}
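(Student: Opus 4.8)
The plan is to pick a single witness $B\in\hb$ --- one exists since $\hb$ is non-empty --- and use the exactness of the cross $t$-intersection to pin down how $A$ and $A'$ sit inside $B$. Since $|A\cap B|=t$ while $|A|=t+1$, exactly one element $a\in A$ lies outside $B$, so $A\setminus\{a\}\subseteq B$; similarly there is a unique $a'\in A'$ with $A'\setminus\{a'\}\subseteq B$. Consequently every element of $A\cup A'$ other than $a$ or $a'$ lies in $B$, i.e. $(A\cup A')\setminus\{a,a'\}\subseteq B$, which forces $|A\cup A'|\le |B|+2=t+3$.

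From here I would simply apply inclusion--exclusion: writing $j:=|A\cap A'|$ we have $|A\cup A'|=2(t+1)-j$, so the displayed inequality becomes $2(t+1)-j\le t+3$, that is $j\ge t-1$. For the upper bound, if $A\neq A'$ then $j\le t$ since both are $(t+1)$-sets (the degenerate case $A=A'$ is not at issue). Combining, $|A\cap A'|\in\{t-1,t\}$, which is the assertion.

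There is essentially no obstacle here beyond a little edge-case hygiene: if it happens that $a=a'$, then $(A\cup A')\setminus\{a,a'\}$ has size $|A\cup A'|-1$ rather than $|A\cup A'|-2$, which only strengthens the conclusion to $j\ge t$, so nothing breaks. The point of isolating this Fact is that it severely constrains $\ha$ (and, by the symmetric statement, $\hb$): once all pairwise intersections within $\ha$ are known to lie in $\{t-1,t\}$, the remainder of the proof of Proposition~\ref{prop-1.4} can split according to whether they are all equal to $t$ --- which should lead to the sunflower conclusion (ii) --- or some pair meets in exactly $t-1$ points --- which feeds the bounded alternatives (i) and (iii).
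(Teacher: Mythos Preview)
Your argument is correct and is essentially the same as the paper's: both fix a witness $B\in\hb$ and use exact cross $t$-intersection to squeeze $|A\cap A'|$ between $t-1$ and $t$. The only cosmetic difference is that you bound $|A\cup A'|\le t+3$ directly via $A\cup A'\subseteq B\cup\{a,a'\}$ and then apply inclusion--exclusion, whereas the paper phrases the same count as a contradiction (if $|A\cap A'|\le t-2$ then $|B\cap(A\cup(A'\setminus D))|\ge t+2>|B|$).
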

\begin{proof}
Indeed, if $A\cap A'=D$ with $|D|\leq t-2$, then for any $B\in \hb$ the fact $|B\cap (A'\setminus D)|\geq 2$ implies
\[
|B\cap (A\cup (A'\setminus D))|=|B\cap A| +|B\cap (A'\setminus D)|\geq t+2,
\]
contradicting $|B|=t+1$.
\end{proof}

\begin{fact}\label{fact2}
Suppose that $|A\cap A'|=t-1$ then $A\cap A'\subset B$ for all $B\in \hb$.
\end{fact}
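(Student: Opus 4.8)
The plan is to run the same kind of double counting of intersection sizes that proved Fact~\ref{fact1}, but now exploiting the full strength of \emph{exactness}: every $B\in\hb$ meets every member of $\ha$ in a set of size precisely $t$. Write $D=A\cap A'$, so $|D|=t-1$, and observe that $A\setminus D$ and $A'\setminus D$ are two $2$-element sets, disjoint from each other and from $D$; hence $A\cup A'$ is the disjoint union $D\sqcup(A\setminus D)\sqcup(A'\setminus D)$ of sizes $t-1,2,2$.

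First I would fix an arbitrary $B\in\hb$ and bound $|B\cap D|$ from below in two ways. Splitting $B\cap A$ as $(B\cap D)\sqcup(B\cap(A\setminus D))$ and using $|B\cap A|=t$ together with $|B\cap(A\setminus D)|\le 2$ gives $|B\cap D|\ge t-2$; the same inequality follows by the identical argument applied to $B\cap A'$. Since also $|B\cap D|\le|D|=t-1$, we are left with the two possibilities $|B\cap D|\in\{t-2,t-1\}$.

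The hard part --- really the only point that needs an idea --- is to eliminate the case $|B\cap D|=t-2$. In that case equality must hold in both lower bounds above, so $|B\cap(A\setminus D)|=2$ and $|B\cap(A'\setminus D)|=2$, i.e.\ $A\setminus D\subseteq B$ and $A'\setminus D\subseteq B$. Then $B$ contains the disjoint union $(B\cap D)\sqcup(A\setminus D)\sqcup(A'\setminus D)$, a set with $(t-2)+2+2=t+2$ elements, contradicting $|B|=t+1$. Therefore $|B\cap D|=t-1$, which forces $D\subseteq B$; as $B\in\hb$ was arbitrary, $A\cap A'=D\subseteq B$ for every $B\in\hb$, completing the proof. (The hypothesis $t\ge 2$ is used only to guarantee $D\neq\emptyset$ so that the statement has content; the counting itself never breaks down.)
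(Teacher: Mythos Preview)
Your proof is correct and follows essentially the same route as the paper's: assuming $|B\cap D|\le t-2$, the exactness $|B\cap A|=|B\cap A'|=t$ forces $A\setminus D\subset B$ and $A'\setminus D\subset B$, whence $|B|\ge (t-2)+2+2=t+2$, contradicting $|B|=t+1$. The paper presents the argument more tersely (three lines) but the idea and the key size count are identical.
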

\begin{proof}
Suppose $|B\cap (A\cap A')|\leq t-2$. Then $B\supset A\setminus A'$, $B\supset A'\setminus A$ and $|B\cap A\cap A'|=t-2$ follow. Consequently $|B|=t+2$, contradiction.
\end{proof}

If $|A\cap A'|=t$ for all $A,A'\in \ha$ and also $B\cap B'=t$ for all $B,B'\in \hb$,
then $\ha\cup \hb$ is either a subset of $\binom{[t+2]}{t+1}$ up to isomorphism or
a sunflower with center of size $t$. If $\ha\cup \hb$ is a sunflower with center of size $t$,
then (ii) holds.  If $\ha\cup \hb$ is a subset of $\binom{[t+2]}{t+1}$, note that the exact cross $t$-intersection
 implies   $\ha\cap \hb=\emptyset$, then  $|\ha|+|\hb|\leq t+2$
and thereby $|\ha||\hb|\leq \frac{(t+2)^2}{4}$. Thus (iii) holds.

In the rest of the proof, we assume that there exist $A,A'\in \ha$ such that $|A\cap A'|=t-1$.
By symmetry we assume that $[t+1],[t-1]\cup\{t+2,t+3\}\in \ha$.
By Fact \ref{fact2} $[t-1]\subset B$ for all $B\in \hb$.
We claim that $|\hb|\leq 4$.  Indeed, for every $B\in \hb$,  $|B\cap [t+1]|= t$ and
$|B\cap ([t-1]\cup\{t+2,t+3\})|=t$ imply that $B=[t-1]\cup\{x,y\}$
with $x\in \{t,t+1\}$, $y\in \{t+2,t+3\}$. Now we distinguish two cases.

\vspace{6pt}
{\noindent\bf Case 1. } $\exists B,B'\in \hb$ with $|B\cap B'|=t-1$.
\vspace{6pt}

Necessarily, $B\cap B'=[t-1]$. Consequently  $[t-1]\subset A$ for all $A\in \ha$ and even $A\setminus [t-1]\subset [t,t+3]$. Now $\ha([t-1])$ and $\hb([t-1])$ are cross-intersecting $2$-graphs on the four vertices $t,t+1,t+2,t+3$. We infer $\min\{|\ha|,|\hb|\}=2$. It follows that $|\ha||\hb|\leq 8\leq \frac{(t+2)^2}{2}$ and (iii) holds.

\vspace{6pt}
{\noindent\bf Case 2. } $|B\cap B'|=t$ for every $B,B'\in \hb$.
\vspace{6pt}

Since $B\setminus [t-1]\subset \{t,t+1\}\times \{t+2,t+3\}$, the
only possibility is $|\hb|=2$. Say $\hb=\{[t]\cup\{t+2\},[t]\cup \{t+3\}\}$.

Set $\ha=\ha_0\cup\ha_1$ where $\ha_0=\{A\in \ha\colon [t]\subset A\}$
and $\ha_1=\{A\in \ha\colon |A\cap[t]|=t-1\}$. Now for any $A\in \ha_1$, $|A\cap B|=t$ implies
$\{t+2,t+3\}\subset A$. Hence $|\ha_1|\leq t$. Since $\ha$ does not contain a sunflower of $k-t+1$ petals with center of size $t$, we infer that $|\ha_0|\leq k-t+1$.
If $|\ha_0|\leq 2$, then $|\ha||\hb|\leq 2(t+2)\leq \frac{(t+2)^2}{2}$, (iii) holds.
If $|\ha_0|\geq 3$, since $[t-1]\cup\{t+2,t+3\}\in \ha$, then  $\tau_t(\ha)\geq t+1$. Since  $|\hb|\leq 2$, $|\ha|\leq k+1$, (i) holds.
\end{proof}

\begin{proof}[Proof of Theorem \ref{thm-main}]
Let $\hb_1=\hb(\hf)$, $\hb_2=\hb(\hg)$,  $\ha_1=\hb_1^{(t+1)}$, $\ha_2=\hb_2^{(t+1)}$
and  let $s_1=s(\hb_1)$, $s_2=s(\hb_2)$. Let us partition $\hf$ into $\hf^{(s_1)}\cup \ldots\cup\hf^{(k)}$
 where $F\in \hf^{(\ell)}$ if $\max\{|B|\colon B\in \hb_1, B\subset F\}=\ell$. Similarly,
 partition $\hg$ into $\hg^{(s_2)}\cup \ldots\cup\hg^{(k)}$. By non-triviality of $\hf$ and $\hg$,
 we know $s_1\geq t+1$ and $s_2\geq t+1$. For every $m\in [s_1,k]$, define
 \[
 \hf^{(\geq m)}= \bigcup_{m\leq \ell\leq k} \hf^{(\ell)}.
 \]
 For $m\in [s_2,k]$, define
 \[
  \hg^{(\geq m)}= \bigcup_{m\leq \ell\leq k} \hg^{(\ell)}.
 \]
 Let $\alpha_\ell = \left(\binom{s_2}{t}k^{\ell-t}\right)^{-1}|\hb_1^{(\ell)}|$. By \eqref{ineq-crosshb2} we have
\begin{align}\label{ineq-alhpa}
\sum\limits_{m\leq \ell\leq k}\alpha_\ell\leq \sum\limits_{s_1\leq \ell\leq k}\alpha_\ell \leq 1.
\end{align}
Let $f(n,k,\ell)=\binom{s_2}{t}k^{\ell-t}\binom{n-\ell}{k-\ell}$. For $n\geq k^2$, we have
\begin{align}\label{fnkl}
\frac{f(n,k,\ell+1)}{f(n,k,\ell)}= \frac{k^{\ell+1-t}\binom{n-\ell-1}{k-\ell-1}}{k^{\ell-t}\binom{n-\ell}{k-\ell}}
=\frac{k(k-\ell)}{n-\ell} \leq 1.
\end{align}
Then by \eqref{ineq-alhpa} and \eqref{fnkl} we have for $m\geq s_1$
\begin{align}\label{ineq-case1hf}
|\hf^{(\geq m)}|\leq \sum_{m\leq \ell\leq k} |\hb_1^{(\ell)}|\binom{n-\ell}{k-\ell}
 =&\sum_{m\leq \ell\leq k}\alpha_\ell f(n,k,\ell)\nonumber\\[5pt]
\leq&  f(n,k,m)\nonumber\\[5pt]
 =&\binom{s_2}{t}k^{m-t}\binom{n-m}{k-m}.
\end{align}
Similarly, we have for $m\geq s_2$
\begin{align}\label{ineq-case1hg}
|\hg^{(\geq m)}|\leq \binom{s_1}{t}k^{m-t}\binom{n-m}{k-m}.
\end{align}

 Without loss of generality we assume that $s_1\geq s_2$. Now we distinguish four cases.

\vspace{6pt}
{\noindent\bf Case 1. } $s_1\geq s_2\geq t+2$.
\vspace{6pt}

Applying \eqref{ineq-case1hf} with $m=s_1$, we obtain that
\begin{align}\label{ineq-case1hf2}
|\hf|\leq \binom{s_2}{t}k^{s_1-t}\binom{n-s_1}{k-s_1}.
\end{align}
Applying \eqref{ineq-case1hg} with $m=s_2$, we obtain that
\begin{align}\label{ineq-case1hg2}
|\hg|\leq \binom{s_1}{t}k^{s_2-t}\binom{n-s_2}{k-s_2}.
\end{align}
Then
\begin{align*}
|\hf||\hg|\leq \binom{s_1}{t}k^{s_1-t}\binom{n-s_1}{k-s_1}\binom{s_2}{t}k^{s_2-t}\binom{n-s_2}{k-s_2}.
\end{align*}
Let $g_i(n,k,s_i)= \binom{s_i}{t}k^{s_i-t}\binom{n-s_i}{k-s_i}$, $i=1,2$. Since $s_i\geq t+2$ and for $n\geq tk^2$,
\begin{align}\label{gnksi}
\frac{g_i(n,k,s_i+1)}{g_i(n,k,s_i)} =\frac{\binom{s_i+1}{t}k^{s_i+1-t}\binom{n-s_i-1}{k-s_i-1}}{\binom{s_i}{t}k^{s_i-t}\binom{n-s_i}{k-s_i}}
=\frac{s_i+1}{s_i+1-t}\frac{k(k-s_i)}{n-s_i}\leq \frac{t+3}{3}\frac{k(k-s_i)}{n-s_i}<1,
\end{align}
it follows that for $n\geq (t+2)^2k^2+t+1$,
\begin{align*}
|\hf||\hg|\leq g_1(n,k,s_1)g_2(n,k,s_2)&\leq g_1(n,k,t+2)g_2(n,k,t+2)\\[5pt]
&= \binom{t+2}{t}^2k^{4}\binom{n-t-2}{k-t-2}^2\\[5pt]
&\leq \frac{(t+2)^4k^4 (k-t-1)^2}{4(n-t-1)^2}\binom{n-t-1}{k-t-1}^2 \\[5pt]
&\leq \frac{(k-t+1)^2}{4} \binom{n-t-1}{k-t-1}^2.
\end{align*}
Apply \eqref{ineq-key3} with $c=4$, we conclude that
\[
|\hf||\hg| \leq \frac{(k-t+1)^2}{2} \binom{n-k-1}{k-t-1}^2< |\hh(n,k,t)|^2.
\]

\vspace{6pt}
{\noindent\bf Case 2. } $s_1\geq t+3$ and $s_2=t+1$.
\vspace{6pt}

By  \eqref{ineq-hbub} we have $|\ha_2| \leq \binom{s_1}{t}(k-t+1)$.  By \eqref{ineq-case1hf},
\[
|\hf|\leq (t+1)k^{s_1-t}\binom{n-s_1}{k-s_1}.
\]
 By \eqref{ineq-case1hg},
\[
|\hg^{(\geq t+2)}|\leq \binom{s_1}{t} k^2\binom{n-t-2}{k-t-2}.
\]
It follows that
\begin{align*}
|\hf||\hg|&\leq (t+1)k^{s_1-t}\binom{n-s_1}{k-s_1}
\left(|\ha_2|\binom{n-t-1}{k-t-1}+\binom{s_1}{t}k^2\binom{n-t-2}{k-t-2}\right)\\[5pt]
&\leq (t+1)k^{s_1-t}\binom{n-s_1}{k-s_1}
\left(\binom{s_1}{t}(k-t+1)\binom{n-t-1}{k-t-1}+\binom{s_1}{t}k^2\binom{n-t-2}{k-t-2}\right)\\[5pt]
&\leq (t+1)g_1(n,k,s_1)
\left((k-t+1)\binom{n-t-1}{k-t-1}+k^2\binom{n-t-2}{k-t-2}\right).
\end{align*}
By \eqref{gnksi} we see that  $g_1(n,k,s_1)$ is a decreasing function of $s_1$ and $s_1\geq t+3$.
Thus, for $n\geq (t+2)^2 k^2+t+1$ we have
\begin{align*}
|\hf||\hg|&\leq (t+1)g_1(n,k,t+3)
\left((k-t+1)\binom{n-t-1}{k-t-1}+k^2\binom{n-t-2}{k-t-2}\right)\\[5pt]&\leq (t+1)\binom{t+3}{3}k^3\binom{n-t-3}{k-t-3}
\left((k-t+1)\binom{n-t-1}{k-t-1}+k^2\binom{n-t-2}{k-t-2}\right)\\[5pt]
&\leq \frac{(t+1)^2(t+2)(t+3)k^3 (k-t-1)^2}{6(n-t-1)^2}
\left((k-t+1)+\frac{k^2(k-t-1)}{n-t-1}\right)\binom{n-t-1}{k-t-1}^2\\[5pt]
&\leq \frac{(t+2)^4k^4 (k-t+1)}{6(n-t-1)^2}
\left(k-t+1+\frac{k^2(k-t-1)}{n-t-1}\right)\binom{n-t-1}{k-t-1}^2\\[5pt]
&\leq \frac{k-t+1}{6}
\left(k-t+1+k-t+1\right)\binom{n-t-1}{k-t-1}^2\\[5pt]
&= \frac{(k-t+1)^2}{3}\binom{n-t-1}{k-t-1}^2.
\end{align*}
Apply \eqref{ineq-key3} with $c=4$, we conclude that
\[
|\hf||\hg| < \frac{2(k-t+1)^2}{3} \binom{n-k-1}{k-t-1}^2< |\hh(n,k,t)|^2.
\]

\vspace{6pt}
{\noindent\bf Case 3. } $s_1= t+2$ and $s_2=t+1$.
\vspace{6pt}

 By  \eqref{ineq-case1hf} and \eqref{ineq-case1hg},  we obtain that
\begin{align}\label{ineq-case3hf}
|\hf^{(\geq t+3)}|\leq (t+1)k^{3}\binom{n-t-3}{k-t-3}
\end{align}
and
\begin{align}\label{ineq-case3hg}
|\hg^{(\geq t+2)}|\leq \binom{t+2}{t}k^2\binom{n-t-2}{k-t-2}\leq \frac{(t+2)^2}{2}k^2\binom{n-t-2}{k-t-2}.
\end{align}

Fix some $S\in \hb_1^{(t+2)}$. By Lemma \ref{lem3-1} (i), we see that $|A_2\cap S|\geq t$ for all $A_2\in \ha_2$. For any $T\in \binom{S}{t}$, define
\[
N(T) =\{x\colon T\cup\{x\}\in \ha_2\} \mbox{ and }
\Gamma=\left\{T\in \binom{S}{t}\colon |N(T)|\geq 4\right\}.
\]
By Lemma \ref{lem-5.1} we see that $|N(T)|\leq k-t+1$. If $|N(T)|\geq 4$, then we claim that $T\subset S'$ for all $S'\in \hb_1^{(t+2)}$. Indeed, if $|S'\cap T|<t$, then for every $x\in N(T)$, $|S'\cap (T\cup \{x\})|\geq t$ implies $|S'\cap T|=t-1$ and $x\in S'$. It follows that $|S'|\geq t-1+4=t+3$, a contradiction.

\vspace{6pt}
{\noindent\bf Subcase 3.1. } There exist $T,T'\in \Gamma$ such that $T\cup T'=S$.
\vspace{6pt}

Then we have $T'\cup T\subset S'$ for all $S'\in \hb_1^{(t+2)}$. It follows that $|\hb_1^{(t+2)}|=1$.
By  \eqref{ineq-hbub} we have $|\ha_2| \leq \binom{t+2}{t}(k-t+1)$.
By \eqref{ineq-case3hf} and \eqref{ineq-case3hg} we obtain that
\begin{align*}
|\hf||\hg|&\leq \left(|\hb_1^{(t+2)}|\binom{n-t-2}{k-t-2}+(t+1)k^3\binom{n-t-3}{k-t-3}\right)\\[5pt]
&\quad\quad\cdot\left(|\ha_2|\binom{n-t-1}{k-t-1}+\frac{(t+2)^2}{2}k^2\binom{n-t-2}{k-t-2}\right)\\[5pt]
&\leq \left(\frac{k-t-1}{n-t-1}+ \frac{(t+1)k^3 (k-t-1)^2}{(n-t-1)^2}\right)\binom{n-t-1}{k-t-1}\\[5pt]
&\quad\quad\cdot\left(\frac{(t+2)^2}{2}(k-t+1)+\frac{(t+2)^2k^2(k-t-1)}{2(n-t-1)}\right) \binom{n-t-1}{k-t-1}.
\end{align*}
Therefore, using $n\geq (t+2)^2k^2+t+1$ and $k\geq 5$ we arrive at
\begin{align*}
|\hf||\hg|&\leq \left(\frac{(t+2)^2(k-t-1)}{(n-t-1)}+ \frac{(t+1)(t+2)^2k^3 (k-t-1)^2}{(n-t-1)^2}\right)\\[5pt]
&\quad\quad\cdot\left(\frac{k-t+1}{2}+\frac{k^2(k-t-1)}{2(n-t-1)}\right) \binom{n-t-1}{k-t-1}^2\\[5pt]
&\leq \left(\frac{k-t-1}{4}+ \frac{ k-t+1}{4}\right)\left(\frac{k-t+1}{2}+\frac{k-t+1}{4}\right) \binom{n-t-1}{k-t-1}^2\\[5pt]
&\leq \frac{3}{8}(k-t+1)^2 \binom{n-t-1}{k-t-1}^2.
\end{align*}
Apply \eqref{ineq-key3} with $c=4$, we conclude that
\[
|\hf||\hg| < \frac{3(k-t+1)^2}{4} \binom{n-k-1}{k-t-1}^2< |\hh(n,k,t)|^2.
\]

\vspace{6pt}
{\noindent\bf Subcase 3.2. } $T\cup T'\subsetneq S$ for every  $T,T'\in \Gamma$.
\vspace{6pt}

Note that $T\cup T'\subsetneq S$ implies $|T\cap T'|=t-1$ for every  $T,T'\in \Gamma$. Fix some $T,T'\in \Gamma$ and let $C=T\cap T'$, $T\setminus C= \{x\}$, $T'\setminus C= \{y\}$. Recall that $\Gamma \subset \binom{S}{t}$ and $|S|=t+2$. If there exists $T''\in \Gamma\setminus \{T,T'\}$ such that $C\subset T''$, then we infer that $|\Gamma|=3$. If $|T''\cap C|\leq t-1$ for all $T''\in \Gamma\setminus \{T,T'\}$, then  $|T''\cap T|=t-1$ and $|T''\cap T'|=t-1$ imply that $x,y\in T''$ and $|T''\cap C|=t-1$. Hence there are at most $t-1$ possibilities for $T''$ and $|\Gamma|\leq t+1$.  Thus $|\Gamma|\leq t+1$ and it follows that
\begin{align}\label{ineq-subcase3.2ha2}
|\ha_2|\leq |\Gamma|(k-t+1)+3\left(\binom{t+2}{t}-|\Gamma|\right)\leq (t+1)(k-t-2)+3\binom{t+2}{2}.
\end{align}
By \eqref{ineq-case1hf} we have
\begin{align}\label{ineq-subcase3.2hf}
|\hf|\leq (t+1)k^2\binom{n-t-2}{k-t-2}.
\end{align}
Therefore, by \eqref{ineq-case3hg}, \eqref{ineq-subcase3.2ha2} and \eqref{ineq-subcase3.2hf}
we obtain that
\begin{align*}
|\hf||\hg|&\leq (t+1)k^2\binom{n-t-2}{k-t-2}
\left(|\ha_2|\binom{n-t-1}{k-t-1}+\frac{(t+2)^2}{2}k^2\binom{n-t-2}{k-t-2}\right)\\[5pt]
&\leq (t+1)k^2\binom{n-t-2}{k-t-2}
\left((t+1)(k-t-2)+3\binom{t+2}{2}\right)\binom{n-t-1}{k-t-1}\\[5pt]
&\qquad+(t+1)k^2\binom{n-t-2}{k-t-2}\frac{(t+2)^2}{2}k^2\binom{n-t-2}{k-t-2}\\[5pt]
&\leq \left(\frac{(t+1)^2k^2(k-t-1)^2}{n-t-1}+\frac{3(t+1)^2(t+2)k^2(k-t-1)}{2(n-t-1)}\right)\binom{n-t-1}{k-t-1}^2\\[5pt]
&\qquad+\frac{(t+1)(t+2)^2k^4(k-t-1)^2}{2(n-t-1)^2}\binom{n-t-1}{k-t-1}^2.
\end{align*}
For $n\geq 4(t+2)^2k^2$, we arrive at
\begin{align*}
|\hf||\hg|&< \left(\frac{(k-t+1)^2}{4}+\frac{3(t+1)(k-t+1)}{8}+\frac{(k-t+1)^2}{8}\right)
\binom{n-t-1}{k-t-1}^2\\[5pt]
&\leq \frac{3}{4}\max\left\{(t+2)^2,(k-t+1)^2\right\}\binom{n-t-1}{k-t-1}^2.
\end{align*}
Now apply \eqref{ineq-key3} with $c=8$, we conclude that
\[
|\hf||\hg| < \max\left\{(t+2)^2,(k-t+1)^2\right\}\binom{n-k-1}{k-t-1}^2< \max\{|\ha(n,k,t)|^2,|\hh(n,k,t)|^2\}.
\]

\vspace{6pt}
{\noindent\bf Case 4. } $s_1= t+1$ and $s_2=t+1$.
\vspace{6pt}

Recall that $\ha_1=\hht_t^{(t+1)}(\hg)$ and $\ha_2=\hht_t^{(t+1)}(\hf)$. By the assumption, we have $\ha_1\neq \emptyset\neq \ha_2$. By \eqref{ineq-case1hf} and \eqref{ineq-case1hg},
 we have
\begin{align}\label{ineq-hf1}
|\hf|\leq |\ha_1|\binom{n-t-1}{k-t-1}+(t+1)k^2\binom{n-t-2}{k-t-2}
\end{align}
and
\begin{align}\label{ineq-hg1}
|\hg|\leq |\ha_2|\binom{n-t-1}{k-t-1}+(t+1)k^2\binom{n-t-2}{k-t-2}.
\end{align}
 By Proposition 3.1, we may assume that $\ha_1\cap \ha_2=\emptyset$.
Then \eqref{ineq-hbub2} implies $|\ha_1|+|\ha_2|\leq 4(k-t+1)$.

If $|\ha_1||\ha_2|\leq \max\{(t+2)^2,(k-t+1)^2\}/2$, then by multiplying \eqref{ineq-hf1}
and \eqref{ineq-hg1} we get
\begin{align*}
|\hf||\hg|&\leq |\ha_1||\ha_2|\binom{n-t-1}{k-t-1}^2
+(t+1)k^2\binom{n-t-2}{k-t-2}\binom{n-t-1}{k-t-1}(|\ha_1|+|\ha_2|)\\[5pt]
&\quad\quad +(t+1)^2k^4\binom{n-t-2}{k-t-2}^2\\[5pt]
&\leq \frac{1}{2}\max\left\{(t+2)^2,(k-t+1)^2\right\}\binom{n-t-1}{k-t-1}^2
+\frac{4(t+1)k^2(k-t+1)^2}{n-t-1}\binom{n-t-1}{k-t-1}^2\\[5pt]
&\quad\quad +\frac{(t+1)^2k^4(k-t-1)^2}{(n-t-1)^2}\binom{n-t-1}{k-t-1}^2.
\end{align*}
For $n\geq 4(t+2)^2k^2$, we arrive at
\begin{align*}
|\hf||\hg|&\leq \left(\frac{1}{2}\max\left\{(t+2)^2,(k-t+1)^2\right\}
+\frac{(k-t+1)^2}{4}+\frac{(k-t+1)^2}{8}\right)\binom{n-t-1}{k-t-1}^2\\[5pt]
&\leq \frac{7}{8}\max\left\{(t+2)^2,(k-t+1)^2\right\}\binom{n-t-1}{k-t-1}^2.
\end{align*}
Applying \eqref{ineq-key3} with $c=16$,  we obtain that
\begin{align*}
|\hf||\hg| &\leq  \frac{7}{8}\times \frac{16}{16-2}\max\left\{(t+2)^2,(k-t+1)^2\right\}\binom{n-k-1}{k-t-1}^2\\[5pt]
&=\max\left\{(t+2)^2,(k-t+1)^2\right\}\binom{n-k-1}{k-t-1}^2\\[5pt]
&\leq \max\left\{|\ha(n,k,t)|^2,|\hh(n,k,t)|^2\right\}.
\end{align*}
Thus in the rest of the proof we may assume that
\begin{align}\label{indirectAssump}
|\ha_1||\ha_2|> \frac{1}{2}\max\left\{(t+2)^2,(k-t+1)^2\right\}.
 \end{align}

Since $\ha_1\cap \ha_2=\emptyset$, we see that $\ha_1,\ha_2$ are non-empty exact cross $t$-intersecting.
Applying  Proposition \ref{prop-1.4} with $\ha=\ha_1$, $\hb=\ha_2$,
we see that one of (i), (ii), (iii) in Proposition \ref{prop-1.4}  holds.
If (iii) holds, then $|\ha_1||\ha_2|\leq \frac{(t+2)^2}{2}$, contradicting \eqref{indirectAssump}.
Thus, either (i) or (ii) of Proposition \ref{prop-1.4}  holds.

\vspace{6pt}
{\noindent\bf Subcase 4.1.} Either $|\ha_1|\leq 2$, $|\ha_2|\leq k+1$, $\tau_t(\ha_2)\geq  t+1$
or $|\ha_2|\leq 2$, $|\ha_1|\leq k+1$, $\tau_t(\ha_1)\geq t+1$.
\vspace{6pt}

By symmetry, assume that $|\ha_1|\leq 2$, $|\ha_2|\leq k+1$ and  $\tau_t(\ha_2)\geq  t+1$.
Let $r_2$ be the minimum integer such that $\tau_{t}(\hb_2^{(\leq r_2)})\geq t+1$.
Then clearly $s_2=r_2=t+1$. Let $\alpha_\ell' = \left((t+1)^2k^{\ell-t-1}\right)^{-1}|\hb_1^{(\ell)}|$.
 By \eqref{ineq-crosshb} we have
\begin{align}\label{ineq-alhpa2}
\sum\limits_{t+2\leq \ell\leq k}\alpha_\ell' \leq \sum\limits_{t+1\leq \ell\leq k}\alpha_\ell' \leq 1.
\end{align}
Let $h(n,k,\ell)=(t+1)^2k^{\ell-t-1}\binom{n-\ell}{k-\ell}$. Since
\[
\frac{h(n,k,\ell+1)}{h(n,k,\ell)} =\frac{k^{\ell-t}\binom{n-\ell-1}{k-\ell-1}}{k^{\ell-t-1}
\binom{n-\ell}{k-\ell}} =\frac{k(k-\ell)}{n-\ell}< 1,
\]
by \eqref{ineq-alhpa2} we infer
\[
\sum_{t+2\leq \ell\leq k} |\hf^{(\ell)}| \leq \sum_{t+2\leq \ell\leq k} \alpha_\ell' h(n,k,\ell)
\leq h(n,k,t+2)=(t+1)^2k\binom{n-t-2}{k-t-2}.
\]
It follows that
\begin{align}\label{ineq-hf2}
|\hf| =|\hf^{(t+1)}|+\sum_{t+2\leq \ell\leq k} |\hf^{(\ell)}|
&\leq |\ha_1|\binom{n-t-1}{k-t-1}+(t+1)^2k\binom{n-t-2}{k-t-2}\nonumber\\[5pt]
&\leq 2\binom{n-t-1}{k-t-1}+(t+1)^2k\binom{n-t-2}{k-t-2}.
\end{align}

If $k=t+1$ then
\[
|\hf||\hg|=|\ha_1||\ha_2|\leq 2(k+1)\leq 2(t+2)<|\ha(n,t+1,t)|^2
 \]
 and we are done. Hence we may assume that $k\geq t+2$.  Note that $k\geq t+2$ and $k\geq 5$ imply $2(k+1)\leq (t+1)(k-t+1)$. Therefore, by \eqref{ineq-hf2} and  \eqref{ineq-hg1} we obtain that
\begin{align*}
|\hf||\hg|&\leq\left(2\binom{n-t-1}{k-t-1}+(t+1)^2k\binom{n-t-2}{k-t-2}\right)\\[5pt]
&\quad\quad\cdot\left((k+1)\binom{n-t-1}{k-t-1}+(t+1)k^2\binom{n-t-2}{k-t-2}\right)\\[5pt]
&\leq 2(k+1)\binom{n-t-1}{k-t-1}^2+\left(2(t+1)k^2+2(t+1)^2k(k+1)\right)\binom{n-t-2}{k-t-2}\binom{n-t-1}{k-t-1}\\[5pt]
&\quad\quad +(t+1)^3k^3\binom{n-t-2}{k-t-2}^2.
\end{align*}
Note that $(t+1)(k+1)=tk+k+t+1\leq tk+2k=(t+2)k$ and thereby
\[
2(t+1)k^2+2(t+1)^2k(k+1)\leq 2(t+1)k^2+2(t+1)(t+2)k^2=2(t+1)(t+3)k^2.
\]
Then for $n\geq 4(t+2)^2k^2$,
\begin{align*}
|\hf||\hg|&< (t+1)(k-t+1)\binom{n-t-1}{k-t-1}^2+\frac{2(t+1)(t+3)k^2(k-t-1)}{n-t-1}\binom{n-t-1}{k-t-1}^2\\[5pt]
&\quad\quad +\frac{(t+1)^3k^3(k-t-1)^2}{(n-t-1)^2}\binom{n-t-1}{k-t-1}^2\\[5pt]
&< (t+1)(k-t+1)\binom{n-t-1}{k-t-1}^2+\frac{k-t-1}{2}\binom{n-t-1}{k-t-1}^2 +\frac{k-t-1}{4}\binom{n-t-1}{k-t-1}^2\\[5pt]
&= \left(t+\frac{7}{4}\right)(k-t+1)\binom{n-t-1}{k-t-1}^2.
\end{align*}
Now applying \eqref{ineq-key3} with $c=8(t+2)$, we conclude that
\begin{align*}
|\hf||\hg|< \left(t+\frac{7}{4}\right)(k-t+1) \frac{8(t+2)}{8(t+2)-2}\binom{n-k-1}{k-t-1}^2&=(t+2)(k-t+1)\binom{n-k-1}{k-t-1}^2\\[5pt]
&\leq  \max\left\{|\ha(n,k,t)|^2,|\hh(n,k,t)|^2\right\}.
\end{align*}

\vspace{6pt}
{\noindent\bf Subcase 4.2.} $\ha_1\cup \ha_2$ is a sunflower with center of size $t$.
\vspace{6pt}

Without loss of generality, assume that $\ha_1=\{[t]\cup \{a_1\},\ldots, [t]\cup \{a_p\} \}$ and $\ha_2=\{[t]\cup \{b_1\},\ldots, [t]\cup \{b_q\}\}$.  By Lemma \ref{lem-5.1} we see $p\leq k-t+1$ and $q\leq k-t+1$. If $p\leq 2$ or $q\leq 2$ holds, then
\[
|\ha_1||\ha_2|\leq 2(k-t+1)\leq \frac{1}{2}(t+2)(k-t+1)\leq \frac{1}{2} \max\left\{(t+2)^2,(k-t+1)^2\right\},
\]
contradicting \eqref{indirectAssump}. Thus we further assume that $p,q\geq 3$. Let
$\hf_0= \hf([t])$, $\hf_1=\hf\setminus \hf_0$, $\hg_0=\hg([t])$ and $\hg_1=\hg\setminus \hg_0$.

\begin{claim}\label{claim1}
For each $F\in \hf_1$, $|F\cap[t]|= t-1$ and $\{b_1,\ldots,b_q\}\subset F$. Similarly, for each $G\in \hg_1$, $|G\cap[t]|= t-1$ and $\{a_1,\ldots,a_p\}\subset G$.
\end{claim}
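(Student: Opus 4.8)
The plan is to read off both assertions of the claim directly from the fact that every member of $\ha_2=\hb_2^{(t+1)}$ is a $t$-transversal of $\hf$, and every member of $\ha_1=\hb_1^{(t+1)}$ is a $t$-transversal of $\hg$. Indeed, $\hb_2=\hb(\hg)$ is by definition the family of minimal members of $\hht_t(\hf)$, so $\ha_2\subset\hb_2$ gives $|A\cap F|\geq t$ for every $A\in\ha_2$ and every $F\in\hf$; symmetrically $|A\cap G|\geq t$ for all $A\in\ha_1$, $G\in\hg$. (This is also immediate from Lemma~\ref{lem3-1}: $\hb_1,\hb_2$ are cross $t$-intersecting and every $F\in\hf$ contains some $B\in\hb_1$, so $|F\cap A|\geq|B\cap A|\geq t$.) Recall finally that, by the normalization fixed just above the claim, the center of the sunflower $\ha_1\cup\ha_2$ is $[t]$ and the petal elements $a_1,\ldots,a_p,b_1,\ldots,b_q$ all lie outside $[t]$.

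First I would fix $F\in\hf_1$, so that $|F\cap[t]|\leq t-1$ by definition of $\hf_1$. For each $j\in\{1,\ldots,q\}$ the set $[t]\cup\{b_j\}$ belongs to $\ha_2$ and is the disjoint union of $[t]$ and $\{b_j\}$, whence
\[
t \;\leq\; \bigl|F\cap([t]\cup\{b_j\})\bigr| \;=\; |F\cap[t]| + |F\cap\{b_j\}| \;\leq\; (t-1)+1 \;=\; t .
\]
Equality must therefore hold throughout, which forces $|F\cap[t]|=t-1$ and $b_j\in F$. Letting $j$ range over $\{1,\ldots,q\}$ yields $|F\cap[t]|=t-1$ together with $\{b_1,\ldots,b_q\}\subset F$, which is the first part of the claim. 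The second part is obtained by the identical computation with the roles of $\hf,\ha_2,b_j$ played by $\hg,\ha_1,a_i$: for $G\in\hg_1$ we have $|G\cap[t]|\leq t-1$, and testing $G$ against each $[t]\cup\{a_i\}\in\ha_1$, $i=1,\ldots,p$, forces $|G\cap[t]|=t-1$ and $a_i\in G$.

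No real obstacle arises here; the computation is essentially the same one-line cardinality count used in Fact~\ref{fact1}, Fact~\ref{fact2} and in the lemma proving \eqref{ineq-hbub2}. The only point that needs care is that $|F\cap([t]\cup\{b_j\})| = |F\cap[t]| + |F\cap\{b_j\}|$ is an exact equality rather than an inequality, which is precisely where disjointness of the petals from the center $[t]$ enters. Note also that the hypotheses $p,q\geq 3$ play no role in the claim — they are only invoked afterwards; we use here merely that $\ha_1\neq\emptyset\neq\ha_2$, i.e. $p,q\geq 1$, so that the conclusions $\{a_1,\ldots,a_p\}\subset G$ and $\{b_1,\ldots,b_q\}\subset F$ are non-vacuous, and of course the claim is trivially true if $\hf_1$ (resp. $\hg_1$) is empty.
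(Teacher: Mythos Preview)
Your proof is correct and follows essentially the same approach as the paper: both argue that $|F\cap([t]\cup\{b_j\})|\geq t$ together with $[t]\not\subset F$ forces $|F\cap[t]|=t-1$ and $b_j\in F$. Your version simply spells out the one-line count in more detail.
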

\begin{proof}
Indeed, simply note that $|F\cap ([t]\cup \{b_j\})|\geq t$ for $j=1,\ldots,q$ and $[t]\not\subset F$, we see that $|F\cap[t]|= t-1$ and $\{b_1,\ldots,b_q\}\subset F$.
\end{proof}

By Claim \ref{claim1}, we see that
\begin{align}\label{ineq-case4.2hf1hg1}
|\hf_1|\leq t\binom{n-t-q+1}{k-t-q+1}\leq t\binom{n-t-2}{k-t-2},\ |\hg_1|\leq t\binom{n-t-p+1}{k-t-p+1}\leq t\binom{n-t-2}{k-t-2}.
\end{align}

By non-triviality, we know that $\hf_1\neq \emptyset\neq \hg_1$. Fix some $F\in \hf_1$.
Then for every $G_0\in \hg_0$, since $|G_0\cap F|\geq t$ and $|F\cap [t]|= t-1$,
$G_0\cap (F\setminus [t])\neq \emptyset$. Therefore,
\begin{align}\label{ineq-case4.2hg0}
|\hg_0|\leq \binom{n-t}{k-t}-\binom{n-t-|F\setminus [t]|}{k-t}= \binom{n-t}{k-t}-\binom{n-k-1}{k-t}=|\hh(n,k,t)|-t.
\end{align}
Similarly,
\begin{align}\label{ineq-case4.2hf0}
|\hf_0|\leq \binom{n-t}{k-t}-\binom{n-k-1}{k-t}=|\hh(n,k,t)|-t.
\end{align}

Let $\hk_1=\{F\setminus [t]\colon F\in \hf_1\}$ and $\hk_2=\{G\setminus [t]\colon G\in \hg_1\}$.
If $|\hk_1|=|\hk_2|=1$, then Claim \ref{claim1} implies that $|\hf_1|,|\hg_1|\leq t$.
It follows that $|\hf||\hg| \leq |\hh(n,k,t)|^2$ and we are done.
If at least one of $\hk_1$ and $\hk_2$ has size greater than one, without loss of generality,
assume that $|\hk_1|\geq 2$ and let $K_1,K_2\in \hk_1$. Then $F\cap K_i\neq \emptyset$ for $i=1,2$
and for each $F\in \hf_0$. Therefore
$F\cap (K_1\cap K_2)\neq \emptyset$ or $F\cap (K_1\setminus K_2)\neq \emptyset$,
$F\cap (K_2\setminus K_1)\neq \emptyset$. It follows that
\[
|\hf_0|\leq \binom{n-t}{k-t}-\binom{n-t-|K_1\cap K_2|}{k-t} +|K_2\setminus K_1||K_1\setminus K_2|\binom{n-t-2}{k-t-2}.
\]
Let $|K_2\setminus K_1|=x$. Note that $|K_1|=|K_2|=k-t+1$ implies that
$1\leq x\leq k-t$. Define
\[
\varphi(x) =\binom{n-t}{k-t}-\binom{n-k-1+x}{k-t} +x^2\binom{n-t-2}{k-t-2}.
\]
Note that for $n\geq 4k^2$,
\begin{align*}
\varphi'(x) &=-\binom{n-k-1+x}{k-t}\sum_{i=0}^{k-t-1}\frac{1}{n-k-1+x-i} +2x\binom{n-t-2}{k-t-2}\\[5pt]
&\leq -\binom{n-k}{k-t}\frac{k-t}{n-t} +2(k-t)\binom{n-t-2}{k-t-2}\\[5pt]
&\overset{\eqref{ineq-key}}{\leq} -\frac{n-t-(k-t)^2}{n-t}\binom{n-t}{k-t}\frac{k-t}{n-t}
+2(k-t)\binom{n-t-2}{k-t-2}\\[5pt]
&\leq -\frac{1}{2}\binom{n-t-1}{k-t-1}+2(k-t)\binom{n-t-2}{k-t-2}\\[5pt]
&<0.
\end{align*}
By $x\geq 1$, we infer
\begin{align}\label{ineq-case4.2hf00}
|\hf_0|\leq \varphi(1)&=\binom{n-t}{k-t}-\binom{n-k}{k-t} +\binom{n-t-2}{k-t-2}\nonumber\\[5pt]
&= |\hh(n,k,t)| -\binom{n-k-1}{k-t-1} +\binom{n-t-2}{k-t-2}.
\end{align}
Thus by \eqref{ineq-case4.2hf00}, \eqref{ineq-case4.2hf1hg1} and \eqref{ineq-case4.2hg0} we have
\begin{align}\label{ineq-hfhg}
|\hf||\hg|&=\left(|\hf_0|+|\hf_1|\right)\left(|\hg_0|+|\hg_1|\right)\nonumber\\[5pt]
&\leq \left(|\hh(n,k,t)| -\binom{n-k-1}{k-t-1}+(t+1)\binom{n-t-2}{k-t-2}\right)\nonumber\\[5pt]
&\qquad\qquad\cdot\left(|\hh(n,k,t)| +t\binom{n-t-2}{k-t-2}\right)\nonumber\\[5pt]
&= |\hh(n,k,t)|^2 + |\hh(n,k,t)|\left((2t+1)\binom{n-t-2}{k-t-2}-\binom{n-k-1}{k-t-1}\right)\nonumber\\[5pt]
&\qquad\qquad+ t(t+1)\binom{n-t-2}{k-t-2}^2-t\binom{n-k-1}{k-t-1}\binom{n-t-2}{k-t-2}.
\end{align}

Applying \eqref{ineq-key2} with $c=2$, we have
\[
\binom{n-k-1}{k-t-1}\geq \frac{c-1}{c} \binom{n-t-1}{k-t-1}=\frac{1}{2} \binom{n-t-1}{k-t-1}.
\]
Then for $n\geq 4(t+1)k$,
\begin{align}\label{ineq-case4.2part1}
(2t+1)\binom{n-t-2}{k-t-2}-\binom{n-k-1}{k-t-1}& \leq (2t+1)\binom{n-t-2}{k-t-2}-\frac{1}{2}\binom{n-t-1}{k-t-1}\nonumber\\[5pt]
&= \binom{n-t-2}{k-t-2}\left(2t+1-\frac{n-t-1}{2(k-t-1)}\right)\nonumber\\[5pt]
&<0,
\end{align}
and
\begin{align}\label{ineq-case4.2part2}
 & t(t+1)\binom{n-t-2}{k-t-2}^2-t\binom{n-k-1}{k-t-1}\binom{n-t-2}{k-t-2}\nonumber\\[5pt]
 \leq  & t(t+1)\binom{n-t-2}{k-t-2}^2-\frac{t}{2}\binom{n-t-1}{k-t-1}\binom{n-t-2}{k-t-2}\nonumber\\[5pt]
  =  & t\binom{n-t-2}{k-t-2}^2\left(t+1-\frac{(n-t-1)}{2(k-t-1)}\right)\nonumber\\[5pt]
  <&0.
\end{align}
By \eqref{ineq-hfhg}, \eqref{ineq-case4.2part1} and \eqref{ineq-case4.2part2},
we conclude that $|\hf||\hg|<|\hh(n,k,t)|^2$ and the theorem is proven.
\end{proof}


{\noindent \bf Acknowledgement. }We would like to thank the referees for their helpful comments and detailed corrections. The second author was
supported by the National Natural Science Foundation of China (No. 11701407).


\begin{thebibliography}{10}
  \bibitem{cllw2022}
 M. Cao, M. Lu, B. Lv, K. Wang, Some intersection theorems for finite sets, arXiv:2205.10789, 2022.
 \bibitem{daykin}
D.E. Daykin, Erd\H{o}s-Ko-Rado from Kruskal-Katona, J. Combin. Theory, Ser. A 17 (1972), 254--255.

 \bibitem{EKR} P. Erd\H{o}s, C. Ko, R. Rado, Intersection theorems for systems of finite sets, Quart. J. Math. Oxford Ser. 12 (1961), 313--320.
 \bibitem{F78} P. Frankl, The Erd\H{o}s-Ko-Rado theorem is true for $n = ckt$,  Coll. Math. Soc. J. Bolyai 18 (1978), 365--375.
 \bibitem{F78-2}
P. Frankl, On intersecting families of finite sets, J. Combin. Theory, Ser. A 24 (1978), 146--161.
\bibitem{F84}
    P. Frankl, A new short proof for the Kruskal-Katona theorem, Discrete Math. 48 (1984), 327--329.
\bibitem{F87}
P. Frankl, Erd\H{o}s-Ko-Rado theorem with conditions on the maximal degree, J. Comb. Theory, Ser. A 46 (1987) 252--263.
\bibitem{F87-2}P. Frankl, The shifting technique in extremal set theory, Surveys in Combinatorics  123 (1987), 81--110.

 \bibitem{FW2022}
 P. Frankl, J. Wang, A product version of the Hilton-Milner Theorem, arXiv:2206.07218, 2022.

\bibitem{Hilton}
A.J.W. Hilton, The Erd\H{o}s-Ko-Rado Theorem with valency conditions, unpublished manuscript,
1976.
 \bibitem{Hilton77}
 A.J.W. Hilton, An intersection theorem for a collection of families of subsets of a finite set, J. London Math. Soc. 15 (1977), 369--376.
 \bibitem{HM67}
 A.J.W. Hilton, E.C. Milner, Some intersection theorems for systems of finite sets, Q. J. Math. 18  (1967), 369--384.


  \bibitem{Katona}
G.O.H. Katona, A theorem on finite sets, in: Theory of Graphs, Proc. Colloq. Tihany, 1966, Akad.
Kiad\'{o}, Budapest, 1968; Classic Papers in Combinatorics, 1987, pp. 381--401.

\bibitem{Keevash}
P. Keevash, Shadows and intersections: stability and new proofs, Adv. Math. 218 (2008), 1685--1703.
\bibitem{Kruskal}
J.B. Kruskal, The number of simplices in a complex, in: Math. Optimization Techniques, California
Press, Berkeley, 1963, pp. 251--278.

 \bibitem{Pyber86}
 L. Pyber, A new generalization of the Erd\H{o}s-Ko-Rado theorem, J. Combin. Theory, Ser.
A 43 (1986), 85--90.

\bibitem{W84}
R. M. Wilson, The exact bound in the Erd\H{o}s-Ko-Rado theorem, Combinatorica 4 (1984), 247--257.
\end{thebibliography}
\end{document}